\title[A pullback operation on a class of currents]{A pullback operation on a class of currents}
\author{H{\aa}kan Samuelsson Kalm}
\subjclass[2010]{32C30, 14C17, 32W20, 32U40}
\address{H{\aa}kan Samuelsson Kalm, Department of Mathematical Sciences, University of Gothenburg and 
Chalmers University of Technology, SE-412 96 G\"{o}teborg, Sweden}
\email{hasam@chalmers.se}
\date{\today}
\newtheorem{proposition}{Proposition}[section]
\newtheorem{theorem}[proposition]{Theorem}
\newtheorem{lemma}[proposition]{Lemma}
\theoremstyle{definition}
\newtheorem{definition}[proposition]{Definition}
\newtheorem{example}[proposition]{Example}
\newtheorem{remark}[proposition]{Remark}
\numberwithin{equation}{section}
\DeclareMathOperator{\Hom}{\mathscr{H}\text{\kern -3pt {\calligra\Large om}}\,}
\DeclareMathOperator{\Ext}{\mathscr{E}\text{\kern -3pt {\calligra\Large xt}}\,\,}
\DeclareMathOperator{\Image}{\mathscr{I}\text{\kern -3pt {\calligra\Large m}}\,}
\DeclareMathOperator{\Ker}{\mathscr{K}\text{\kern -3pt {\calligra\Large er}}\,}
\newcommand{\C}{\mathbb{C}}
\newcommand{\debar}{\bar{\partial}}
\newcommand{\J}{\mathcal{J}}
\newcommand{\PM}{\mathscr{P} \kern -3pt \mathscr{M}}
\newcommand{\PS}{\mathcal{PS}}
\newcommand{\CH}{\mathscr{C} \kern -2pt \mathscr{H}}
\def\newop#1{\expandafter\def\csname #1\endcsname{\mathop{\rm #1}\nolimits}}
\begin{document}
\nocite{*}
\bibliographystyle{plain}

\begin{abstract}
For any holomorphic mapping $f\colon X\to Y$ between a complex manifold $X$ and a complex Hermitian manifold $Y$ we extend the pullback
$f^*$ from smooth forms to a class of currents. 
We provide a basic calculus for this pullback and show under quite mild assumptions that it is cohomologically sound. 
The class of currents we consider contains in particular the Lelong current of any
analytic cycle. Our pullback depends in general on the Hermitian structure of $Y$ but coincides with the usual pullback of currents in case 
$f$ is a submersion. The construction is 
based on the Gysin mapping in algebraic geometry. 
\end{abstract}

\maketitle
\thispagestyle{empty}

\section{Introduction}
Calculus of currents is central in complex geometry. One example is the celebrated Monge--Amp\`ere product. Closely related to 
current products is pullback of currents.
The purpose of this paper is to introduce 
a pullback operation on a class of currents 
and establish basic properties.

We say that a current $\mu$ on a complex manifold $Y$
is a $\PS$-\emph{current} (a pseudosmooth current), $\mu\in\mathcal{PS}(Y)$, 
if it locally is a finite sum of direct images $g_*\alpha$ where $\alpha$ is a smooth compactly supported form and $g$ is a holomorphic mapping. 
For instance, Lelong currents of analytic cycles are $\PS$-currents since if $i\colon V\to Y$ is a subvariety,
then $[V]=i_*1$, where $[V]$ is the Lelong current of integration over $V_{reg}$, and by 
a partition of unity on $V$ it follows that $[V]$ is a $\PS$-current. 
The class of $\PS$-currents is closed under exterior products with smooth forms and under $d$, $\debar$, and $\partial$.

\begin{theorem}\label{thm1}
Let $X$ be a complex manifold and let $Y$ be a complex Hermitian manifold. For any holomorphic mapping $f\colon X\to Y$ there is a 
linear mapping $f^*\colon\mathcal{PS}(Y)\to\mathcal{PS}(X)$ with the following properties:
If $\varphi$ is a smooth form on $Y$, then $f^{*}\varphi$ 
is the usual pullback. If additionally  $\mu\in\mathcal{PS}(Y)$, then
\begin{equation}\label{sov}
f^{*}(\varphi\wedge\mu)=f^*\varphi\wedge f^{*}\mu, 
\end{equation}
\begin{equation}\label{sov2}
f^{*}d\mu=d f^{*}\mu, \quad f^{*}\debar\mu=\debar f^{*}\mu, \quad f^{*}\partial\mu=\partial f^{*}\mu. 
\end{equation}

Let $U\subset X$ be an open set. Then $(f^*\mu)|_U = f|_U^*\mu$.
Assume that $f(U)$ is a complex manifold and that the mapping $\tilde f \colon U\to f(U)$ induced by $f$ is a submersion.
Let $\iota\colon f(U)\to Y$ be the inclusion so that $f|_U=\iota\circ\tilde f$. Then $f|_U^*\mu=\tilde f^*\iota^*\mu$,
where $\tilde f^*$ is the usual pullback of currents under a submersion. 
\end{theorem}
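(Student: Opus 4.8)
The plan is to construct the map $f^*$ explicitly and then verify each property in turn, with the last (submersion) statement being the principal goal to organize around. Since a pseudosmooth current is locally a finite sum $\sum_j g_{j*}\alpha_j$ with each $g_j$ proper on $\mathrm{supp}\,\alpha_j$, I would first define $f^*$ on a single summand $g_*\alpha$ and then extend by linearity. To do this I would pull back the diagram: form the fiber product $W = X\times_Y Z$ (where $g\colon Z\to Y$), which carries projections $\pi_X\colon W\to X$ and $\pi_Z\colon W\to Z$, and attempt to define $f^*(g_*\alpha) = (\pi_X)_*\big(\,\text{(something built from }\pi_Z^*\alpha\text{ and a Gysin factor)}\big)$. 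The Hermitian structure of $Y$ enters here: following the Gysin-mapping philosophy advertised in the abstract, the "something" should be $\pi_Z^*\alpha$ wedged with a Gysin/Segre-type class manufactured from the normal data of the graph embedding of $f$ using the metric on $Y$. The well-definedness — independence of the chosen local representation $\sum_j g_{j*}\alpha_j$ — is the first thing that must be checked, and I expect to invoke the corresponding uniqueness result from the calculus of \cite{AESWY1,AESWY2,ASWY} rather than reprove it.

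Once $f^*$ is defined, the properties in \eqref{sov} and \eqref{sov2} should follow from functoriality of direct images and the projection formula, together with the fact that pulling back a smooth form commutes with the Gysin factor (since smoothness means the Gysin correction is trivial, collapsing to the ordinary pullback). I would verify $f^*\varphi = \varphi\circ f$ for smooth $\varphi$ directly from the construction, then deduce \eqref{sov} by the projection formula applied to $\pi_X$, and deduce the commutation with $d,\debar,\partial$ in \eqref{sov2} from the fact that $d$ commutes with direct images and with the ordinary pullback $\pi_Z^*$, plus $d$-closedness of the Gysin factor. The locality statement $(f^*\mu)|_U = f|_U^*\mu$ should be essentially immediate, since the whole construction is local over $X$ and the fiber product and Gysin factor localize compatibly with restriction to $U$.

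For the submersion statement, the strategy is to exploit the factorization $f|_U = \iota\circ\tilde f$ and reduce to two simpler cases. First I would establish that for the inclusion $\iota\colon f(U)\to Y$ of a complex submanifold, $\iota^*$ agrees with the Gysin pullback to the submanifold, which is where the Hermitian structure genuinely matters and where the construction should reproduce the standard intersection with $f(U)$. Second, for the submersion $\tilde f$, the key point is that the ordinary pullback of currents is defined and that the Gysin factor for a submersion is trivial (the normal bundle of the graph being pulled back along a submersion contributes nothing), so that my constructed $\tilde f^*$ coincides with the usual pullback under a submersion. The composition $f|_U^* = \tilde f^*\iota^*$ then follows from a functoriality/compatibility statement for the construction under composing a submersion after an embedding.

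The hard part will be the compatibility under composition, namely showing $f|_U^* = \tilde f^*\iota^*$ rather than merely $(\tilde f^*\iota^*)$ being some other reasonable pullback. The delicate issue is that my definition of $f^*$ is built from the graph embedding of $f|_U$ as a single map, whereas the right-hand side factors the graph through $f(U)$; reconciling the two requires comparing the Gysin factor of the composite graph embedding with the product of the Gysin factors coming from $\tilde f$ and from $\iota$ separately. I expect this to rest on a transitivity/excess-intersection property of the Gysin construction (base change along the fiber product, compatibility of normal cones under composition), which I would want to have available from the generalized-cycle calculus of \cite{ASWY}; verifying that the metric-dependent correction terms match up — and in particular that the submersion $\tilde f$ contributes the trivial factor so that $\tilde f^*$ reduces to the honest pullback of currents — is the technical crux.
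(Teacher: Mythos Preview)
Your overall architecture matches the paper's: define $f^*\mu$ as the Gysin map $i^!$ of the graph embedding $i\colon X\to X\times Y$ applied to $\pi_2^*\mu$ (this is your fiber-product picture unwound), then read off linearity, the smooth-form case, \eqref{sov}, and \eqref{sov2} from the corresponding properties of $i^!$ (the paper packages these as Proposition~\ref{gyProp1} and Proposition~\ref{kruka}). Locality $(f^*\mu)|_U=f|_U^*\mu$ is, as you say, immediate from the construction. So far so good.

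The submersion statement is where your plan diverges from the paper and where there is a genuine gap. You propose to obtain $f|_U^*=\tilde f^*\iota^*$ from a functoriality/transitivity property of the Gysin construction under composition. But functoriality $f^*=f_1^*f_2^*$ for $f=f_2\circ f_1$ \emph{fails in general} for this pullback; the paper devotes all of Section~\ref{obstruktion} to the obstruction, and Example~\ref{burger} exhibits an explicit failure. The ``transitivity/excess-intersection property of the Gysin construction'' you want to invoke from \cite{ASWY} is therefore not available in the generality you need. You would have to prove from scratch that it holds in the special configuration submersion-after-embedding, verifying by hand that the metric-dependent correction terms in Proposition~\ref{gurka} vanish. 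That may be doable (cf.\ Corollary~\ref{gardinrull}), but it is not lighter than the paper's argument and is not the route taken.

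The paper (Proposition~\ref{submersionProp}) avoids any composition law. After locality, it reduces to the model case $X=U'\times U''$ with $\tilde f$ the projection onto $U''$ and $\iota\colon U''\hookrightarrow Y$. In this chart the defining section of the graph is $\Psi=\widetilde\Psi(\iota(u''),y)$, independent of $u'$, and $\pi_2^*\mu=1_{U'}\otimes p_2^*\mu$. A direct computation with the explicit formula \eqref{alt11} then gives
\[
f^\Diamond\mu = 1_{U'}\otimes\iota^\Diamond\mu = \tilde f^*\iota^\Diamond\mu,
\]
and taking the correct bidegree finishes. The factorization through $\iota$ emerges from the tensor structure of the computation, not from any abstract functoriality.
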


Notice in view of \eqref{sov} that $\text{supp}\, f^*\mu\subset f^{-1}(\text{supp}\, \mu)$.
Notice also that if $f$ is a submersion, then by the last part of Theorem~\ref{thm1}, our $f^*$
coincides with the usual pullback  of currents under a submersion.


For our second main result we will assume that $Y$ has an additional property.
We say that a complex manifold $Y$ is \emph{good} if there is a holomorphic section $\Phi$ of a holomorphic vector bundle $F\to Y\times Y$ 
such that $\Phi$ defines the diagonal in $Y\times Y$, i.e., $\Phi$ vanishes to first order precisely on the diagonal. 
Many complex manifolds are good.
For instance, all projective manifold are good, and any submanifold of a good manifold is good.

\begin{theorem}\label{thm2}
Let $Y$ be a good compact complex Hermitian manifold, $X$ a compact complex manifold, and $f\colon X\to Y$
a holomorphic mapping. If $\mu\in\mathcal{PS}(Y)$ is closed,
then for any Hermitian metric on $Y$, 
\begin{equation}\label{lus}
[f^*\mu]_{\text{dR}}=f^*[\mu]_{\text{dR}},
\end{equation} 
where $[\cdot]_{\text{dR}}$ means de~Rham cohomology class and $f^*$ in the right-hand side is the usual pullback
of cohomology classes. In particular, $[f^*\mu]_{dR}$ is independent of the Hermitian structure on $Y$.
\end{theorem}

The assumption that $Y$ is good is for proof-technical reasons and we believe that it is not essential for the conclusion of Theorem~\ref{thm2} to hold.

Theorem~\ref{thm2} implies that our pullback on closed $\PS$-currents is functorial on cohomology level. By this we mean that if
$f$ in Theorem~\ref{thm2} can be factorized as $f=f_2\circ f_1$,
where $f_1\colon X\to X'$ and $f_2\colon X'\to Y$ are holomorphic mappings and $X'$ is a good compact complex Hermitian manifold, then 
\begin{equation*}
[f^*\mu]_{dR}=[f_1^*f_2^*\mu]_{dR}.
\end{equation*} 
This follows indeed by Theorem~\ref{thm2} since $f^*[\mu]_{dR}=f_1^*f_2^*[\mu]_{dR}$.
However, in general  $f_1^*f_2^*\mu$ 
depends on a choice of Hermitian structure on $X'$ and therefore one cannot expect $f^*\mu=f_1^*f_2^*\mu$ to hold in general. 
Example~\ref{burger} below shows that in general $f^*\mu\neq f_1^*f_2^*\mu$. 

\smallskip

A standard approach to pullback of currents is the following. 
Let $X$, $Y$, and $f$ be as in Theorem~\ref{thm1}, let $\pi_1\colon X\times Y\to X$ and $\pi_2\colon X\times Y\to Y$
be the standard projections, and let $i\colon X\to X\times Y$ be the graph embedding defined by $i(x)=(x,f(x))$.
If $\mu$ is a current in $Y$, then $\pi_2^*\mu$ is well-defined since $\pi_2$ is a projection; in fact, $\pi_2^*\mu=1\otimes \mu$.
The main step is to give a reasonable meaning to the current product
\begin{equation}\label{hulan}
[i(X)]\wedge\pi_2^*\mu,
\end{equation} 
where $[i(X)]$ is the current of integration over $i(X)$. Then the pullback of $\mu$ under $f$ is defined as
\begin{equation}\label{motet}
(\pi_1)_*([i(X)]\wedge\pi_2^*\mu).
\end{equation}
If $\mu$ is a smooth form, then \eqref{hulan} is canonically defined and \eqref{motet} is the usual pullback.

We follow this approach and the novelty 
is our definition of \eqref{hulan}. 
The definition is modeled on the Gysin mapping introduced in \cite{AESWY1}.
If $\pi_2^*\mu$ is a (generalized) cycle and $i(X)$ can be defined by a global holomorphic section of some vector bundle over $X\times Y$, then \eqref{hulan} indeed
is the image of $\pi_2^*\mu$ under the Gysin mapping in \cite{AESWY1} associated to the embedding $i$.

In case $Y$ is good \eqref{hulan} can be defined as follows. Let $\Phi$ be a global holomorphic section of $F\to Y\times Y$ defining the diagonal.
Then $\Psi:=(f\times\text{id}_Y)^*\Phi$ is a holomorphic section of $E:=(f\times\text{id}_Y)^*\Phi$ over $X\times Y$ defining $i(X)$.
Let $N_X\to i(X)$ be the normal bundle of $i(X)$ in $X\times Y$. 
It is well-known that there is a canonical isomorphism $N_X\simeq \pi_2^*TY|_{i(X)}$. 
This isomorphism induces a Hermitian metric on $N_X$ since $TY$ is equipped with a Hermitian metric; by $Y$ being Hermitian, or having a Hermitian structure,
we mean that $TY$ is equipped with a Hermitian metric.
Moreover, $\Psi$ induces an embedding
$N_X\hookrightarrow E|_{i(X)}$ (see, e.g., \cite[Lemma~7.3]{AESWY1} or Section~\ref{segre-chern-normal} below) and we equip $E$ with a Hermitian metric so that this embedding
is Hermitian, i.e., is an embedding of Hermitian vector bundles.  It then follows from \cite[Proposition~1.5]{AESWY1}, see Example~\ref{grillad} and \eqref{cs1} below,
that we have
\begin{equation}\label{ultra}
[i(X)]=\hat c(N_X)\wedge M^\Psi,
\end{equation}
where $\hat c(N_X)$ is the full Chern form of $N_X$ and $M^\Psi$ 
is a certain current introduced 
by Andersson in \cite{MatsLelong}. 
The current $M^\Psi$ is the sum of currents $M^\Psi_k$ that can be 
defined as\footnote{In this paper $d^c=(\partial-\debar)/4\pi i$ so that 
if $z$ is the complex coordinate in $\C$ then $dd^c\log|z|^2=[0]$.}
$$
M_k^\Psi=\lim_{\epsilon\to 0} \debar\chi(|\Psi|^2/\epsilon)\wedge\frac{\partial \log|\Psi|^2}{2\pi i}\wedge (dd^c\log|\Psi|^2)^{k-1}
$$
if $\chi$ is a smooth regularization of the characteristic function of $[1,\infty)\subset\mathbb{R}$.
These currents are in fact the restriction to $i(X)$ of Monge--Amp\`{e}re products $(dd^c\log|\Psi|^2)^k$, where 
$(dd^c\log|\Psi|^2)^k$ are the Monge--Amp\`{e}re products of Bedford--Taylor and Demailly if $k\leq \text{codim}\, i(X)=\text{dim}\, Y$,
and were introduced in
\cite{MatsLelong} if $k> \text{codim}\, i(X)$.

A basic observation now is that if $\mu\in\PS(Y)$, then one can give a natural meaning to the
products $M_k^\Psi\wedge\pi_2^*\mu$. Indeed, if $\chi$ is as above, then it turns out that the limits
\begin{equation}\label{vadd}
M_k^\Psi\wedge\pi_2^*\mu:=\lim_{\epsilon\to 0} \debar\chi(|\Psi|^2/\epsilon)\wedge\frac{\partial \log|\Psi|^2}{2\pi i}\wedge (dd^c\log|\Psi|^2)^{k-1}\wedge\pi_2^*\mu, \quad k=1,2,\ldots
\end{equation}
exist, are independent of the choice of $\chi$, and are in $\PS(X\times Y)$. Clearly, $M_k^\Psi\wedge\pi_2^*\mu$ have support in $i(X)$.
The limit 
\begin{equation}\label{vaddaa}
M_0^\Psi\wedge\pi_2^*\mu:=\lim_{\epsilon\to 0} (1-\chi(|\Psi|^2/\epsilon))\pi_2^*\mu
\end{equation}
also turns out to exist and to have the properties just mentioned for the limit \eqref{vadd}.
We then define \eqref{hulan} 
to be the component of $\hat c(N_X)\wedge M^\Psi\wedge\pi_2^*\mu$ of the ``expected bidegree'' $(\text{dim}\,Y,\text{dim}\, Y)+\text{bidegree}(\mu)$.
In general, $\hat c(N_X)\wedge M^\Psi\wedge\pi_2^*\mu$ has components of various different bidegrees, but 
if $\mu$ is a smooth form, then in view of \eqref{ultra}, $\hat c(N_X)\wedge M^\Psi\wedge\pi_2^*\mu$ is the canonical product $[i(X)]\wedge\pi_2^*\mu$, which has bidegree
$(\text{dim}\,Y,\text{dim}\, Y)+\text{bidegree}(\mu)$.
We can now define $f^*\mu$ by \eqref{motet}. Since we are taking the component of the expected bidegree in the definition of \eqref{hulan},
it follows that $f^*\mu$ has the same bidegree as $\mu$. After a simplification (using \eqref{fotball} below)
we get the formula
\begin{equation}\label{alt12}
f^*\mu=\sum_{k=0}^nf^*\hat c_{n-k}(TY)\wedge (\pi_{1})_*(M_k^\Psi\wedge\pi_2^*\mu).
\end{equation}

As an illustration, let $f\colon Bl_0\C^2\to\C^2$ be the blowup of the origin in $\C^2$ and let $\mu$ be the Dirac measure at $0\in\C^2$. 
With the standard Hermitian structure on $\C^2$ we have
$f^*\mu=\omega\wedge [D]$, where $D\simeq\mathbb{P}^1$ is the exceptional divisor and $\omega$ is the standard Fubini-Study metric form on 
that $\mathbb{P}^1$, see Example~\ref{strutt} below. 

In this example so-called dimensional excess occurs; 
$\pi_2^*\mu$ and the graph of $f$ do not intersect properly. 
In such situations it is reasonable that $f^*\mu$ has to depend on some additional structure, e.g., a Hermitian metric as in our case. 
This can be compared to the intersection theory in \cite{Fulton} where the intersection of two cycles, which do not intersect properly, 
is determined only up to rational equivalence.



\smallskip

As already mentioned, Lelong currents of analytic cycles are $\PS$-currents but $\PS$-currents also appear naturally in the study of
Monge--Amp\`ere products associated to holomorphic sections of Hermitian vector bundles. For instance, in that setting the so-called  
non-pluripolar part is a $\PS$-current.
Moreover, the pullback of a smooth form under a meromorphic mapping is usually not smooth but only
a $\PS$-current. Therefore our class of currents may have relevance in the study of the dynamics of such mappings.
We believe that the complete generality of the mapping $f\colon X\to Y$ and the properties of our $f^*$ described in 
Theorems~\ref{thm1} and \ref{thm2}
can complement and provide a new viewpoint on existing works on pullback of currents.
Let us mention a few such works.

The intersection theory in \cite{Fulton} contains in particular a 
quite general theory of pullback of cycles such that if $Z$ is a cycle in $Y$ then $f^*(Z)$ is a rational equivalence class in $X$. 
This is extended to pullback of Green currents in the context of arithmetic intersection theory by  Gillet--Soul\'e in \cite{GS1990}.
In complex geometry mainly pullback of positive closed currents has been considered.  
A positive closed $(1,1)$-current locally has a $dd^c$-potential, which is a plurisubharmonic function.
By using (quasi-)plurisubharmonic $dd^c$-potentials one can define the pullback of such currents under surjective mappings, see M\'eo \cite{meo}.
Dinh and Sibony have considered pullback
of positive closed $(p,p)$-currents in several papers. In \cite{DS2007} is defined an essentially canonical pullback, 
with good continuity and cohomological properties,
of such currents under 
holomorphic mappings with constant fiber dimension, cf.\ Example~\ref{sib} below.
In \cite{DSacta} and \cite{DS2010} a general theory of super-potentials
is developed on compact K\"{a}hler manifolds generalizing the case of $(1,1)$-currents. 
Building on some works by Dinh and Sibony, Truong defines a pullback under dominant meromorphic mappings \cite{Tuyen}.
Rather recently, in \cite{DS2018}, on 
K\"{a}hler manifolds was introduced the notions of density currents and 
tangent currents to a positive closed current 
along a submanifold. We believe that the tangent currents to $\pi_2^*\mu$, where as before $\pi_2\colon X\times Y\to Y$is the natural projection,
 along the graph of $f$ is closely connected to our approach; 
cf.\ \cite{KV} where Kaufmann and Vu compare density currents to Monge--Amp\`ere-type products.
We also mention the recent paper \cite{Barlet-beta} by Barlet; on any reduced pure-dimensional analytic space is introduced 
a sheaf of $\debar$-closed $(p,0)$-currents, closed under wedge products and the de~Rham differential, 
so that the pullback under a holomorphic mapping, whose image is not contained in the singular locus, is well-defined and functorial. 


The plan of the paper is as follows. Preliminaries and some basic facts about $\PS$-currents are collected in 
Section~\ref{prelim}. Even though we are primarily interested in currents on complex manifolds,  
it is convenient and natural in our context to recall some basic facts about 
differential forms and currents on (reduced) analytic spaces.
In Section~\ref{MPS} we consider the operator $\mu\mapsto M^\Psi\wedge\mu$ on $\PS$-currents.
This operator is used in Section~\ref{gysinsektion}
to adapt the Gysin mapping in \cite{AESWY1} to our setting. Using this Gysin mapping the pullback operation is introduced in Section~\ref{pb-op-sec} 
and Theorem~\ref{thm1}
is proved. In Section~\ref{diagonal-alternativ} we give an alternative definition of our pullback that can be generalized
to give a definition of a pullback under meromorphic mappings and meromorphic correspondences.
Theorem~\ref{thm2} is proved in Section~\ref{cohom} and functoriality is discussed in Section~\ref{obstruktion}.

\smallskip

{\bf Acknowledgment:} I would like to thank Mats Andersson and Nessim Sibony for valuable comments.
I would also like to thank the anonymous referee for careful reading and important comments and suggestions.


\section{Preliminaries and $\PS$-currents}\label{prelim}
Let $Z$ be a (reduced) analytic space of pure dimension. 
Locally $Z$ can be embedded as an analytic subset of an open set $\Omega$ of some $\C^N$. 
A smooth differential form $\varphi$ in $Z_{reg}$ is smooth in $Z$, $\varphi\in\mathcal{E}(Z)$, 
if there is a local embedding $\iota\colon Z\to\Omega$ and a smooth form $\tilde\varphi$ in $\Omega$
such that $\varphi=\iota^*\tilde\varphi$ on $Z_{reg}$. 
It is well-known that this notion of smooth forms in $Z$ is independent of the choice of local embedding.
It follows that $d$, $\debar$, and $\partial$ are well-defined on $\mathcal{E}(Z)$.

The space of currents, $\mathcal{C}(Z)$, in $Z$
is the dual of the space of test forms, i.e., compactly supported smooth forms in $Z$.
More concretely, if $\iota\colon Z\to\Omega$ is a local embedding, then the currents in $Z$ can be identified with
the currents in $\Omega$ vanishing on test forms $\xi$ with $i^*\xi=0$ in $Z_{reg}$.
By duality, $d$, $\debar$, and $\partial$ are well-defined on $\mathcal{C}(Z)$.
As usual we also have $\mathcal{E}(Z)\subset\mathcal{C}(Z)$ and that $\mathcal{C}(Z)$
is a module over $\mathcal{E}(Z)$.

Let $V$ be an analytic space and $g\colon V\to Z$ a holomorphic mapping. By \cite[Corollary~3.2.21]{BM}
there is a natural pullback mapping $g^*\colon \mathcal{E}(Z)\to \mathcal{E}(V)$. 
If $g$ is proper it follows that there is a pushforward mapping
$g_*\colon\mathcal{C}(V)\to \mathcal{C}(Z)$
defined by $\langle g_*\mu,\xi\rangle=\langle\mu,g^*\xi\rangle$. 
If $\mu\in\mathcal{C}(V)$ and $\varphi\in\mathcal{E}(Z)$, then we have
the following projection formula
\begin{equation}\label{projformel}
\varphi\wedge g_*\mu = g_*(g^*\varphi\wedge\mu).
\end{equation}
If $\mu\in\mathcal{C}(V)$ has compact support, then $g_*\mu$ is defined (in the same way)
also if $g$ is not proper, and \eqref{projformel}
holds.

If $\pi\colon V\to Z$ is a submersion, so that $\pi_*$ of test forms in $V$ are test forms in $Z$,
and $\mu\in\mathcal{C}(Z)$, then $\pi^*\mu$ is defined by $\langle \pi^*\mu,\xi\rangle:=\langle \mu,\pi_*\xi\rangle$.
In particular, $\pi_*\mu$ is defined if $\pi\colon W\times Z\to Z$ is the standard projection. In this case we have
\begin{equation}\label{rast}
\pi^*\mu=1\otimes\mu.
\end{equation}

\begin{definition}\label{PSdef}
A current $\mu$ in $Z$ is in $\PS(Z)$ if in a neighborhood of each point, $\mu$ is a finite sum of currents
$g_*\alpha$, where $g\colon V\to Z$ is a holomorphic mapping from a connected complex manifold $V$ and 
$\alpha$ is a smooth form with compact support in $V$.
\end{definition}

It is clear from the definition that $\PS$-currents have order $0$, and
by \eqref{projformel} it is clear that $\PS(Z)$ is an $\mathcal{E}(Z)$-module. Since $\bar\alpha$ is smooth if $\alpha$
is smooth, and $\overline{g_*\alpha}=g_*\bar\alpha$ it follows that 
\begin{equation}\label{psbarps}
\overline{\PS(Z)} = \PS(Z).
\end{equation}
Moreover, it is clear that if $\widetilde Z$ is a pure dimensional analytic space and $h\colon \widetilde Z\to Z$ is a 
proper holomorphic mapping, then 
\begin{equation}\label{datalabb}
h_*\colon \PS(\widetilde Z)\to\PS(Z).
\end{equation}

One can let $V$ in Definition~\ref{PSdef} be an analytic space without changing $\PS(Z)$. In fact, if $V$ is
an analytic space and $\alpha$ is a smooth form in $V$, then one can first assume that $V$ is irreducible by restricting $\alpha$ to each irreducible component. 
By Hironaka's theorem then there is a modification $\pi\colon \widetilde V\to V$ such that $\widetilde V$
is a connected manifold.
Since $\pi$ is a modification,
\begin{equation}\label{penna}
\alpha = \pi_*\pi^*\alpha
\end{equation}
as currents, and so $g_*\alpha=g_*\pi_*\pi^*\alpha$. Hence, one can replace $V$ by $\widetilde V$ 
and $\alpha$ by $g^*\alpha$. By this observation we get

\begin{example}\label{lelongex}
Let $V\subset Z$ be an analytic subset and $i\colon V\to Z$ the inclusion. 
By Lelong's theorem there is a closed current $[V]$ in $Z$ 
defined as integration over $V_{reg}$. The function $1$ on $V$ is smooth and $i_*1=[V]$. Thus,
by a smooth partition of unity on $V$, we see that
$
[V]\in\PS(Z).
$

It follows that if $\nu=\sum_ja_jV_j$ is an analytic cycle in $Z$, then the Lelong current $[\nu]=\sum_ja_j[V_j]$ is in
$\PS(Z)$. Notice also that there is a $\mu\in\PS(|\nu|)$ such that $[\nu]=j_*\mu$, where $j\colon |\nu|\to Z$
is the inclusion of the support of $\nu$.

Generalized cycles, as introduced in \cite{AESWY1}, are in $\PS(Z)$ as well.\qed
\end{example}

\smallskip

We are primarily interested in $\PS$-currents on manifolds. We will see that on a manifold, $\PS$-currents are \emph{pseudomeromorphic}, so
we recall the definition and some properties that we will use.
Pseudomeromorphic currents were introduced 
in \cite{AWdecomp} and further developed in \cite{ASdolb}.

In $\C$ we have the principal value currents $1/z^\ell$ and the associated residue currents $\debar (1/z^\ell)$.
An \emph{elementary pseudomeromorphic current} is a current in an open set $U\subset\C^N$ of the form
$$
\alpha\wedge\frac{1}{z_1^{\ell_1}}\cdots \frac{1}{z_r^{\ell_r}}\debar\frac{1}{z_{r+1}^{\ell_{r+1}}}\wedge\cdots\wedge\debar\frac{1}{z_{s}^{\ell_{s}}},
$$
where $\alpha$ is a smooth form with compact support in $U$, $(z_1,\ldots,z_N)$ are coordinates in $U$, and
the current products are tensor products. A germ of a current $\tau$ at a point in $Z$ is \emph{pseudomeromorphic} if
it is a finite sum of currents of the form
$$
\pi_*^1\pi_*^2\cdots\pi_*^k\nu,
$$
where each $\pi^j\colon V_j\to V_{j-1}$ is either a modification, a simple projection $V_j\times W\to V_j$, or the inclusion of an open 
subset, and $\nu$ is an elementary pseudomeromorphic current in $V_k\subset\C^N$. The set of such germs turns out to be a sheaf, 
the sheaf $\mathcal{PM}_Z$ of pseudomeromorphic currents in $Z$. We refer to \cite{AW4} for properties of
pseudomeromorphic currents.

The sheaf $\mathcal{PM}_Z$ is closed under $\debar$, $\partial$, and 
under multiplication by smooth forms. Moreover, we have the

\smallskip

\noindent \emph{Dimension principle:} If $\tau\in\mathcal{PM}(Z)$ has bidegree $(*,q) $ and support in an analytic subset
with codimension $>q$, then $\tau=0$.

\begin{example}
Let $\sigma$ be a holomorphic section of a line bundle $L$ and let $\alpha$ be a smooth form with values in $L$.
Then the semi-meromorphic form $\alpha/\sigma$ has a canonical extension across $\{\sigma=0\}$ 
as a pseudomeromorphic current; it can be obtained for instance as a principal value. \qed
\end{example}

If $\tau\in\mathcal{PM}(Z)$ and $V\subset Z$ is a subvariety, then the restriction of $\tau$ to the open set
$Z\setminus V$ has an extension to a pseudomeromorphic current $\mathbf{1}_{Z\setminus V}\tau$ in $Z$ such that
\begin{equation}\label{restr}
\mathbf{1}_{Z\setminus V}\tau = \lim_{\epsilon\to 0}\chi(|h|^2v/\epsilon) \tau
\end{equation}
if $\chi$ is a smooth approximation of the characteristic function of $[1,\infty)\subset\mathbb{R}$, $h$ is a holomorphic 
tuple with $\{h=0\}=V$, and $v$ is a smooth positive function.
It follows that
\begin{equation}\label{restrV}
\mathbf{1}_V\tau:=\tau-\mathbf{1}_{Z\setminus V}\tau
\end{equation}
is pseudomeromorphic with support in $V$. If $\alpha$ is a smooth form, then $\mathbf{1}_V(\alpha\wedge\tau)=\alpha\wedge\mathbf{1}_{V}\tau$.
If $g\colon \widetilde Z\to Z$ proper, $\nu\in\mathcal{PM}(\widetilde Z)$, and $g_*\nu\in\mathcal{PM}(Z)$, then
\begin{equation}\label{restr2}
\mathbf{1}_Vg_*\nu = g_*(\mathbf{1}_{g^{-1}V}\nu).
\end{equation}
If $\nu$ has compact support, then the same holds for any holomorphic mapping $g$.

A current $a$ on $Z$ is \emph{almost semi-meromorphic}, $a\in ASM(Z)$, if $a=\pi_*(\alpha/\sigma)$, where $\pi\colon\widetilde Z\to Z$ is a modification,
$\sigma$ is a holomorphic section of a line bundle $L\to\widetilde Z$, and $\alpha$ is a smooth form with values in $L$; see, e.g., \cite[Section~4]{AW4}.
We have that $ASM(Z)\subset\mathcal{PM}(Z)$. If $a\in ASM(Z)$, then the smallest Zariski closed set outside which $a$ is smooth is called the
Zariski singular support of $a$.

%
%
%
%
%
%
%
%
%
%
%
%
%

\begin{lemma}[\cite{AW4}, Theorem~4.8]\label{asmpm}
If $a\in ASM(Z)$ has Zariski singular support $V$ and $\tau\in \mathcal{PM}(Z)$,
 then there is a unique $T\in\mathcal{PM}(Z)$ such that $T=a\wedge\tau$ in $Z\setminus V$
and $\mathbf{1}_VT=0$.
\end{lemma}

We will denote the current $T$ by $a\wedge\tau$. In view of \eqref{restr} and \eqref{restrV}, since $\mathbf{1}_V(a\wedge\tau)=0$ we have
\begin{equation}\label{asmpm2}
a\wedge\tau = \lim_{\epsilon\to 0} \chi(|h|^2v/\epsilon) a\wedge\tau
\end{equation}
if $\chi$, $h$, and $v$ are as above.

\begin{lemma}[\cite{AW4}, Theorem~2.25] \label{kontakt}
If $Z$ is a complex manifold, then a germ of a current $\mu$ at a point in $Z$ is pseudomeromorphic if and only if
it is a finite sum of currents of the form $h_*\tau$, where $h\colon U\to Z$ is a  holomorphic mapping and $\tau$ is elementary. 
\end{lemma}

In view of this lemma and Definition~\ref{PSdef} it is clear that if $Z$ is a manifold, then $\PS(Z)\subset\mathcal{PM}(Z)$. On a manifold thus $\PS$-currents have the 
properties of pseudomeromorphic currents. 
In particular, restrictions as in \eqref{restr} and \eqref{restrV} are defined on $\PS$-currents.
Let us check that such restrictions preserve $\PS(Z)$. It suffices to see that
if $V\subset Z$ is a subvariety and $\mu\in\PS(Z)$, then $\mathbf{1}_V\mu\in\PS(Z)$. To see this we can assume that $\mu=g_*\alpha$,
where $g$ and $\alpha$ are as in Definition~\ref{PSdef}. By \eqref{restr2} thus 
$\mathbf{1}_V\mu=g_*(\mathbf{1}_{g^{-1}V}\alpha)$. Since $\alpha$ is smooth we have $\mathbf{1}_{g^{-1}V}\alpha=0$
if $g^{-1}V$ is a proper subvariety, and $\mathbf{1}_{g^{-1}V}\alpha=\alpha$ otherwise.
Hence, if $\mu=g_*\alpha$, then either $\mathbf{1}_V\mu$ is $0$ or $\mu$, both 
of which are in $\PS(Z)$.

\subsection{Segre forms, Chern forms, and normal bundles}\label{segre-chern-normal}
We first give a brief presentation of Segre forms and Chern forms based on the presentation in \cite[Section~2]{AESWY1}.

Recall that if $L\to Z$ is a Hermitian line bundle, then there is the associated first Chern form $\hat c_1(L)$, which is a smooth closed $(1,1)$-form.
The first Chern class, $c_1(L)$, of $L$ is the de~Rham cohomology class of $\hat c_1(L)$ and is independent of the Hermitian metric on $L$.
Let $E\to Z$ be a Hermitian vector bundle. Over the projectivization $\pi\colon \mathbb{P}(E)\to Z$ of $E$
(the projective bundle of lines through the zero section of $E$) we have 
the tautological line bundle $L=\mathcal{O}_E(-1)\subset \pi^*E$ and we equip it with the induced Hermitian metric. 
The \emph{total Segre form}, $\hat s(E)=1+\hat s_1(E)+\hat s_2(E)+\cdots$, is defined by
$$
\hat s(E)=\pi_*(1/(1+\hat c_1(L)))=\pi_*(1/(1-\hat c_1(L^*))) .
$$
If $\text{rank}\, E=n$ it follows that
\begin{equation}\label{vafflor}
\hat s_\ell(E)=\pi_* \hat c_1(L^*)^{\ell+n-1}. 
\end{equation}
If $n=1$, then $\mathbb{P}(E)= Z$ so that $\pi=\text{id}_X$, and hence $(1+\hat c_1(L))\wedge\hat s(E)=1$.
In the general case we define
the \emph{total Chern form}, $\hat c(E)=1+\hat c_1(E)+\hat c_2(E)+\cdots$, of $E$ by
\begin{equation}\label{cs1}
\hat c(E)\wedge\hat s(E)=1.
\end{equation}
It is proved in \cite{mourougane} that this definition coincides with the differential geometric definition. 
It is well-known that $\hat c_\ell(E)=0$ if $\ell>n$, and that if $h\colon\widetilde Z\to Z$ is a holomorphic mapping, then
$h^*\hat c(E)=\hat c(h^*E)$ and $h^*\hat s(E)=\hat s(h^*E)$. The Chern classes $c_j(E)$ are the de~Rham 
cohomology classes of $\hat c_j(E)$ and they are independent of the Hermitian metric on $E$.

\smallskip

We will primarily consider Chern and Segre forms of normal bundles of submanifolds, and some times also of locally complete intersections,
so we recall a few things that we need about normal bundles.
Let $\J\subset\mathcal{O}_Z$ be a locally complete intersection ideal sheaf of codimension $n$ and zero set $X$. We will use the following ad hoc definition
of the normal bundle $N_\J\to X$, cf.\ \cite[Section~7]{AESWY1}. 
A section $\phi$ of $N_\J$ 
is a choice of holomorphic $n$-tuple $\phi(s)$ locally in $X$ for each local minimal set
$s=(s_1,\ldots,s_n)$ of generators for $\J$ such that $A\phi(s)=\phi(As)$ on $X$
for any locally defined holomorphic matrix $A$ that is invertible in a neighborhood of $X$.
Thus, each local minimal set of generators for $\J$ gives a local trivialization of $N_\J$.

Assume now that $\J$ is generated by a holomorphic section $\Psi$ of a vector bundle $E\to Z$. Then we have an induced embedding
\begin{equation}\label{elsa}
N_\J\hookrightarrow E|_X
\end{equation}
defined as follows, see, e.g., \cite[Section~7]{AESWY1}. Let $s$ be a local minimal set of generators for $\J$.
Since $\Psi$ generates $\J$ there is a local holomorphic section $B$ of $\text{Hom}(\C^n,E)$ such that 
$\Psi=Bs$ and $B|_X$ is unique and pointwise injective; here $\C^n$ is the trivial vector bundle of rank $n$ over $Z$. 
The embedding \eqref{elsa} then is given by
\begin{equation}\label{elsas}
\phi\mapsto B|_X\phi(s).
\end{equation}

If $\J$ is principal, then $\J$ defines a divisor, $D$, and $\Psi=\Psi^0\Psi'$, where $\Psi^0$ is a holomorphic section of the line bundle $L$ corresponding to $D$
such that $\text{div}\,\Psi^0=D$ and $\Psi'$ is a holomorphic section of  $L^*\otimes E$. In this case $N_\J=L|_{|D|}$
and the embedding \eqref{elsa} extends to an embedding
\begin{equation}\label{rov}
L\hookrightarrow E, \quad \sigma\mapsto \sigma\Psi'.
\end{equation}
For the induced metric $|\cdot |_L$ on $L$ thus $|\Psi^0|_L=|\Psi|$ and so, by the Poincar\'e--Lelong formula,
\begin{equation}\label{pl}
dd^c\log|\Psi|^2 = [D] - \hat c_1(L).
\end{equation}

If $Z$ is a complex manifold and $X\subset Z$ is a submanifold we let $\J_X\subset\mathcal{O}_Z$ be the sheaf of holomorphic functions vanishing on $X$
and we write $N_X$ instead of $N_{\J_X}$. 

If $h\colon \widetilde Z\to Z$ is a holomorphic mapping and $\J\subset \mathcal{O}_Z$, then we let  
$h^*\J\subset\mathcal{O}_{\widetilde Z}$ be the sheaf generated by $h^*$ of the generators of $\J$.
The following functorial property should be well-known but for the reader's convenience we supply a proof. 

\begin{lemma}\label{sladd}
Let $Z$ and $\widetilde Z$ be complex manifolds and $X\subset Z$ a complex submanifold. Assume that $h\colon \widetilde Z\to Z$ is a holomorphic mapping
such that $\widetilde \J:=h^*\J_X$  is a locally complete intersection with zero set $\widetilde X\subset \widetilde Z$.
\begin{itemize}
\item[(a)] There is a natural embedding $N_{\widetilde \J}\hookrightarrow h|_{\widetilde X}^*N_X$; it is an equality if $\text{codim}\,\widetilde X = \text{codim}\, X$.

\item[(b)] Assume that $\J_X$ is generated by a holomorphic section $\Psi$ of a vector bundle $E\to Z$,
so that we have an induced embedding $N_X\hookrightarrow E|_X$.
Then the composition of the embeddings
\begin{equation}\label{sladda}
N_{\widetilde \J}\hookrightarrow h|_{\widetilde X}^* N_X \hookrightarrow h|_{\widetilde X}^*E|_X,
\end{equation}
where the first one is the embedding in (a) and the second one is the pullback of $N_X\hookrightarrow E|_X$,
is the embedding $N_{\widetilde \J}\hookrightarrow (h^*E)|_{\widetilde X}$ induced by $h^*\Psi$.

\item[(c)] Assume that $\J_X$ is generated by a holomorphic section $\Psi$ of $E$ as in (b).
If $E$, $N_X$, and $N_{\widetilde \J}$ are equipped with Hermitian metrics such that 
the embeddings $N_X\hookrightarrow E|_X$ and $N_{\widetilde \J}\hookrightarrow (h^*E)|_{\widetilde X}$ are Hermitian,
then the embeddings in \eqref{sladda} are Hermitian.
\end{itemize}
\end{lemma}

\begin{proof}
Let $\tilde s=(\tilde s_1,\ldots,\tilde s_\kappa)$ be a local minimal set of generators of $\widetilde\J$
and $s=(s_1,\ldots,s_n)$ a local minimal set of generators of $\J_{X}$.

(a): Since by assumption $h^*s$ generates $\widetilde\J$ it follows that $\kappa\leq n$ and that there is a local holomorphic matrix $A$
such that $h^*s=A\tilde s$ and $\text{rank}\, A|_{\widetilde X}=\kappa$. We define the embedding 
$N_{\widetilde \J}\hookrightarrow h|_{\widetilde X}^*N_X$  by the local embeddings
\begin{equation}\label{kalas}
\phi\mapsto A|_{\widetilde X}\phi(\tilde s),
\end{equation}
where $\phi$ is a local section of $N_{\widetilde \J}$. The right-hand side is independent of the local trivialization
of $N_{\widetilde \J}$ given by $\tilde s$ and transforms as a section of $h|_{\widetilde X}^*N_X$.
Thus, the local embeddings \eqref{kalas} indeed give a global embedding $N_{\widetilde \J}\hookrightarrow h|_{\widetilde X}^*N_X$.

Since $n=\text{codim}\,X$ and $\kappa=\text{codim}\,\widetilde X$ it follows that if $\text{codim}\,X=\text{codim}\,\widetilde X$, then
\eqref{kalas} is an isomorphism.

(b): We have that $s$ gives a local trivialization of $N_X$ and hence of $h|^*_{\widetilde X}N_X$. 
Moreover, $\Psi=Bs$ for a local holomorphic section $B$ of $\text{Hom}(\C^n,E)$. In view of \eqref{elsas},
the second embedding in \eqref{sladda} is defined by
\begin{equation}\label{kalasa}
\xi(s)\mapsto h^*B|_{\widetilde X} \xi(s),
\end{equation}
where $\xi(s)$ is a local section of $h|^*_{\widetilde X}N_X$ in the trivialization given by $s$.

We also have that $h^*\Psi=C\tilde s$ for a local holomorphic section $C$ of $\text{Hom}(\C^\kappa,h^*E)$
such that $C|_{\widetilde X}$ is unique. The embedding $N_{\widetilde \J}\hookrightarrow (h^*E)|_{\widetilde X}$
is given by
\begin{equation}\label{kalasat}
\phi\mapsto C|_{\widetilde X}\phi(\tilde s).
\end{equation}
Since $\Psi=Bs$ and $h^*s=A\tilde s$ we have $h^*\Psi=h^*Bh^*s=h^*BA\tilde s$. By uniqueness thus
\begin{equation}\label{flervarr}
C|_{\widetilde X} = h^*B|_{\widetilde X}A|_{\widetilde X}
\end{equation}
and it follows that \eqref{kalasat} is the composition of 
\eqref{kalas} and \eqref{kalasa}.

(c): If the embedding $N_X\hookrightarrow E|_X$ is Hermitian, then it is clear that 
the second embedding in \eqref{sladda}, which is given by \eqref{kalasa}, is Hermitian. 
If the embedding $N_{\widetilde \J}\hookrightarrow (h^*E)|_{\widetilde X}$ is Hermitian it thus follows 
from \eqref{kalasat}, \eqref{flervarr}, and \eqref{kalasa} that
\begin{equation*}
|\phi|^2_{N_{\widetilde \J}} = \big|C|_{\widetilde X}\phi(\tilde s)\big|^2_{h^*E}
= \big|h^*B|_{\widetilde X}A|_{\widetilde X}\phi(\tilde s)\big|^2_{h^*E}
= \big|A|_{\widetilde X}\phi(\tilde s)\big|^2_{h^*N_X}.
\end{equation*}
By \eqref{kalas} thus the first embedding in \eqref{sladda} is Hermitian.
\end{proof}

\subsection{The $M$-operator on pseudomeromorphic currents}\label{MPM}
Let $\Psi$ be a holomorphic section of a Hermitian vector bundle $E\to Z$ with zero set $X$.
We recall the following lemma from \cite[Section~2]{prog}.

\begin{lemma}\label{lillam}
There are unique almost semi-meromorphic currents $m_k^\Psi$, $k=1,2,\ldots$, in $Z$ that coincide 
with $(2\pi i)^{-1}\partial\log|\Psi|^2\wedge (dd^c\log|\Psi|^2)^{k-1}$ in $Z\setminus X$
and such that $\mathbf{1}_X m_k^\Psi=0$.
\end{lemma}

In view of Lemma~\ref{asmpm}, for $\tau\in\mathcal{PM}(Z)$ we have $m_k^\Psi\wedge\tau\in\mathcal{PM}(Z)$ and we let
\begin{equation}\label{gurkmajo}
M_0^\Psi\wedge\tau:=\mathbf{1}_X\tau,\quad M_k^\Psi\wedge\tau:=\mathbf{1}_X\debar(m_k^\Psi\wedge\tau),\,\, k=1,2,\ldots.
\end{equation}
Since restriction and $\debar$ preserve $\mathcal{PM}$ we have $M^\Psi_k\wedge\tau\in\mathcal{PM}(Z)$.
Let $\chi_\epsilon=\chi(|\Psi|^2/\epsilon)$, where $\chi$ is as in \eqref{restr}. In view of \eqref{restr}, \eqref{restrV}, and \eqref{asmpm2},
\begin{eqnarray*}
\mathbf{1}_X\debar(m_k^\Psi\wedge\tau) &=& \lim_{\epsilon\to 0} (1-\chi_\epsilon)\debar(m_k^\Psi\wedge\tau)
=\lim_{\epsilon\to 0} \debar((1-\chi_\epsilon) m_k^\Psi\wedge\tau) + \debar\chi_\epsilon\wedge (m_k^\Psi\wedge\tau) \\
&=&
\debar(\mathbf{1}_X m_k^\Psi\wedge\tau) + \lim_{\epsilon\to 0} \debar\chi_\epsilon \wedge (m_k^\Psi\wedge\tau).
\end{eqnarray*}
By Lemmas~\ref{asmpm} and \ref{lillam} we have $\mathbf{1}_X m_k^\Psi\wedge\tau=0$  and so we get, cf.\ \eqref{vadd} and \eqref{vaddaa},
\begin{equation}\label{universeum}
M_0^\Psi\wedge\tau=\lim_{\epsilon\to 0}(1-\chi(|\Psi|^2/\epsilon))\tau, \quad
M_k^\Psi\wedge\tau=\lim_{\epsilon\to 0}\debar\chi(|\Psi|^2/\epsilon)\wedge m_k^\Psi\wedge\tau.
\end{equation}

Let $M^\Psi\wedge\tau=M_0^\Psi\wedge\tau+M_1^\Psi\wedge\tau+\cdots$. If $\varphi$ is a smooth form in $Z$ it follows by \eqref{universeum} that 
\begin{equation}\label{korv}
M^\Psi\wedge(\varphi\wedge\tau) = \varphi\wedge M^\Psi\wedge\tau.
\end{equation}
If $g\colon\widetilde Z\to Z$ is a holomorphic mapping and $\mu\in\mathcal{PM}(\widetilde Z)$ are such that $g_*\mu$ is defined and in $\mathcal{PM}(Z)$,
then by \eqref{universeum} and \eqref{projformel},
\begin{equation}\label{mos}
M^\Psi\wedge g_*\mu = g_*(M^{g^*\Psi}\wedge\mu).
\end{equation}

We will write $M^\Psi$ instead of $M^\Psi\wedge 1$. Notice that if $\Psi$ is generically non-vanishing, then $M^\Psi_0=0$.

\begin{example}\label{grillad}
Assume that $\Psi$ defines a locally complete intersection ideal sheaf $\J$. 
Then by \eqref{elsa} there is an embedding $N_\J\hookrightarrow E|_X$ and we equip $N_\J$ with the induced metric.
By \cite[Proposition~1.5]{AESWY1},
\begin{equation*}
M^\Psi = \hat s(N_\J)\wedge [\mathcal{Z}_\J],
\end{equation*}
where $\mathcal{Z}_\J$ is the fundamental cycle of the locally complete intersection $\J$; see, e.g., \cite[Chapter~1.5]{Fulton}.
We note two special cases:
\begin{itemize}
\item[(i)] If $\J$ is the ideal sheaf of a submanifold $X$, then $\mathcal{Z}_\J=X$ and so $M^\Psi= \hat s(N_\J)\wedge [X]$.

\item[(ii)] If $\J$ is principal so that it defines a divisor $D$, then $\mathcal{Z}_\J=D$. In view of \eqref{vafflor} thus
$M_k^\Psi = \hat c_1(L^*)^{k-1}\wedge [D]$
if $L$ is the line bundle associated with $D$ equipped with the metric induced by \eqref{rov}. \qed
\end{itemize} 
\end{example}

In the following, if $\Psi$ generates the ideal sheaf of a submanifold $X$, then we will usually simply say that $\Psi$ defines $X$.
Similarly, if $\Psi$ generates the principal ideal sheaf of a divisor $D$ we say that $\Psi$ defines $D$.

\section{The $M$-operator on $\PS$-currents}\label{MPS}
Let $Z$ be a complex manifold. Then $\PS(Z)\subset\mathcal{PM}(Z)$ by Lemma~\ref{kontakt}. Hence,
 if $\mu\in\PS(Z)$ and $\Psi$ is a holomorphic section of a Hermitian vector bundle, then
$M^\Psi\wedge\mu$ is defined and has the properties in Section~\ref{MPM}.
In this section we will see that $M^\Psi\wedge\mu$ has some additional properties when $\mu\in\PS(Z)$. Lemmas~\ref{senf} and \ref{gurka} below are straightforward 
adaptions of parts of \cite[Theorem~5.2]{AESWY1} to $\PS$-currents. 

\begin{lemma}\label{senf}
Let $\Psi$ be a holomorphic section of a Hermitian vector bundle $E\to Z$. 
Let $X=\{\Psi=0\}$, let $i\colon X\to Z$ be the inclusion, and let $\text{codim}\, X=n$. 
If $\mu\in\PS(Z)$, then there are unique $\hat s_{k-n}(\Psi,\mu)\in\PS(X)$
such that
\begin{equation*}
M_k^\Psi\wedge\mu = i_* \hat s_{k-n}(\Psi,\mu), \quad\quad k=0,1,2,\ldots.
\end{equation*}
\end{lemma}

\begin{proof}
The uniqueness is clear since $i_*$ is injective. It therefore suffices to show the lemma locally in $Z$. 
We can thus assume that $\mu=g_*\alpha$,
where $\alpha$ and $g\colon V\to Z$ are as in Definition~\ref{PSdef}.
By Hironaka's theorem, after a modification we can assume that the ideal sheaf generated by $g^*\Psi$ is principal, cf.\ \eqref{penna}.
Let $D$ be the divisor defined by $g^*\Psi$ and let $L$ be the associated line bundle 
equipped with the metric induced by the embedding $L\hookrightarrow g^*E$, cf.\ \eqref{rov}. 
Then by \eqref{mos},  \eqref{korv}, and Example~\ref{grillad}, 
\begin{equation}\label{maja}
M_k^\Psi\wedge\mu = M_k^\Psi\wedge g_*\alpha = g_*(M_k^{g^*\Psi}\wedge\alpha)
=g_*(\alpha\wedge M_k^{g^*\Psi}) = g_*\big(\alpha \wedge\hat c_1(L^*)^{k-1}\wedge [D]\big).
\end{equation}

Let $W=|D|$, let $j\colon W\to V$ be the inclusion, and
consider the fiber diagram
\begin{equation}\label{eq:tomte}
\xymatrix{
W  \ar[d]^h \ar[r]^j & V \ar[d]^g\\
X \ar[r]^i & Z.
}
\end{equation}
By Example~\ref{lelongex} there is a $\mu_{D}\in\PS(W)$ such that 
$[D]=j_*\mu_{D}$. Hence, by \eqref{maja}, \eqref{projformel}, and commutativity of \eqref{eq:tomte},
\begin{equation}\label{marion}
M_k^\Psi\wedge\mu = g_*j_*\big(j^*\alpha\wedge j^*\hat c_1(L^*)^{k-1} \wedge\mu_{D}\big)
=i_*h_*\big(j^*\alpha\wedge j^*\hat c_1(L^*)^{k-1} \wedge\mu_{D}\big).
\end{equation}
Since the class of $\PS$-currents is closed under multiplication by smooth forms and under direct images of 
proper holomorphic mappings it follows that 
\begin{equation*}
\hat s_{k-n}(\Psi,\mu):=h_*\big(j^*\alpha\wedge j^*\hat c_1(L^*)^{k-1} \wedge\mu_{D}\big)
\end{equation*}
is in $\PS(X)$. The lemma thus follows by \eqref{marion}.
\end{proof}

It follows from Lemma~\ref{senf} and \eqref{datalabb} that $M^\Psi\wedge\mu\in\PS(Z)$ if $\mu\in\PS(Z)$.

\begin{lemma}\label{gurka}
Let $\Psi$ be a holomorphic  section of a Hermitian vector bundle $E\to Z$. If $\mu\in\PS(Z)$, then 
$d(M^\Psi\wedge\mu) = M^\Psi\wedge d\mu$. The same holds with $d$ replaced by $\debar$ or $\partial$.
\end{lemma}
\begin{proof}
This is a local statement so we can assume that $\mu=g_*\alpha$, where $\alpha$ 
and $g\colon V\to Z$ are as in Definition~\ref{PSdef}. By Hironaka's theorem, in view of \eqref{penna}, we can also assume that $g^*\Psi$ 
defines a principal ideal sheaf. By Example~\ref{grillad} thus $M^{g^*\Psi}$ is $d$-, $\debar$-, and
$\partial$-closed. In view of \eqref{mos} and \eqref{korv}, using that  $d$ and $g_*$ commute and that $M^{g^*\Psi}$ is closed,
we get
\begin{equation*}
d(M^\Psi\wedge\mu) = d(M^\Psi\wedge g_*\alpha) = g_*d(M^{g^*\Psi}\wedge\alpha)
= g_*(M^{g^*\Psi}\wedge d\alpha) = M^\Psi\wedge g_*d\alpha = M^\Psi\wedge d\mu.
\end{equation*}
The same calculation can be done with $d$ replaced by $\debar$ or $\partial$.
\end{proof}

In general, if $X\subset Z$ is a complex submanifold one cannot expect there to be a global holomorphic section $\Psi$
of a vector bundle $E\to Z$ defining $X$.
However, for any complex submanifold $X\subset Z$ the next result allows us to define global $\PS$-currents 
$\hat s_j(N_X,\mu)$ in $X$ for $\mu\in\PS(Z)$ generalizing $\hat s_j(\Psi,\mu)$ in Lemma~\ref{senf}
as soon as the normal bundle $N_X$ of $X$ is equipped with a Hermitian metric.
This
generalizes \cite[Definition~5.5]{AESWY1} to the setting of $\PS$-currents
and when there is no global holomorphic section defining $X$.

\begin{proposition}\label{ketchup}
Let $Z$ be a complex manifold, $X\subset Z$ a complex submanifold of codimension $n$, and $i\colon X\to Z$ the inclusion.
Assume that the normal bundle $N_X\to X$ of $X$ is equipped with a Hermitian metric. If $\mu\in\PS(Z)$,
then there are unique $\hat s_{k-n}(N_X,\mu)\in\PS(X)$ with the following property:
If there is a holomorphic section $\Psi$ of a Hermitian vector bundle $E$ in an open $U\subset Z$ such that $\Psi$ defines $X$ in $U$,
and the induced embedding $N_X\hookrightarrow E|_X$ is Hermitian, then in $U$
\begin{equation}\label{smaskens}
i_*\hat s_{k-n}(N_X,\mu)=M^\Psi_k\wedge\mu, \quad k=0,1,2\ldots.
\end{equation} 
\end{proposition}

\begin{proof}
Locally in $Z$ there are $\Psi$ and $E$ such that $\Psi$ defines $X$
and $N_X\hookrightarrow E|_X$ is Hermitian. For instance, take local coordinates $z$ such that $X=\{z_1=\cdots =z_n=0\}$,
let $E$ be the trivial rank $n$-bundle, let $\Psi=(z_1,\ldots,z_n)$, and choose a Hermitian metric on 
$E$ appropriately. Since $i_*$ is injective on currents it follows that $\hat s_{k-n}(N_X,\mu)$ must be unique.

To show the existence of $\hat s_{k-n}(N_X,\mu)$ it thus suffices to show it locally. Locally
in $Z$ we can assume that $\mu=g_*\alpha$, where $\alpha$ and $g\colon V\to Z$ are as in Definition~\ref{PSdef}.
We can also assume, by Hironaka's theorem and \eqref{penna}, that $g^*\J_X$ is principal.
Let $D$ be the divisor defined by $g^*\J_X$ and let $L$ be the associated line bundle. Recall that $L|_{|D|}$ is the normal
bundle of $g^*\J_X$. By Lemma~\ref{sladd} (a) we have an embedding
\begin{equation}\label{troca}
L|_{|D|}\hookrightarrow g^*N_X
\end{equation}
and we equip $L$ with a Hermitian metric so that \eqref{troca} is Hermitian. Consider the fiber diagram \eqref{eq:tomte} with 
$W=|D|$. By Example~\ref{lelongex} there is a $\mu_D\in\PS(|D|)$ such that $j_*\mu_D=[D]$, and we let 
\begin{equation}\label{dero}
\hat s_{k-n}(N_X,\mu):=h_*(\mu_D\wedge j^*\hat c_1(L^*)^{k-1}\wedge j^*\alpha).
\end{equation}
Since $\mu_D\in\PS(|D|)$, $j^*\hat c_1(L^*)^{k-1}\wedge j^*\alpha$ is smooth, and $h\colon |D|\to X$ is a proper holomorphic mapping
it follows that $\hat s_{k-n}(N_X,\mu)\in\PS(X)$. 

It remains to check that $\hat s_{k-n}(N_X,\mu)$, as defined in \eqref{dero}, have the claimed property.
Assume therefore that $\Psi$ is a holomorphic section of a Hermitian vector bundle $E$ that defines $X$ in $U$ and such that the 
induced embedding $N_X\hookrightarrow E|_X$ is Hermitian.  In view of \eqref{mos}, \eqref{korv}, and
Example~\ref{grillad}, 
\begin{equation}\label{champis}
M_k^\Psi\wedge\mu = g_*(M_k^{g^*\Psi}\wedge\alpha) = g_*([D]\wedge\hat c_1(L^*)^{k-1}\wedge\alpha),
\end{equation}
where $\hat c_1(L^*)$ here is with respect to the metric on $L$ induced by the embedding $L\hookrightarrow g^*E$ given by $g^*\Psi$; cf.\ \eqref{rov}. 
By Lemma~\ref{sladd}, on 
$W$ this metric is the same as the one induced by \eqref{troca}. Since $[D]=j_*\mu_D$, by \eqref{champis},
commutativity of \eqref{eq:tomte}, and \eqref{dero}, thus
\begin{equation*}
M_k^\Psi\wedge\mu = g_*j_*(\mu_D\wedge j^*\hat c_1(L^*)^{k-1}\wedge j^*\alpha)
=i_*g_*(\mu_D\wedge j^*\hat c_1(L^*)^{k-1}\wedge j^*\alpha)
=i_*\hat s_{k-n}(N_X,\mu).
\end{equation*}
This concludes the proof.
\end{proof}

\begin{example}\label{unna}
In the situation of Proposition~\ref{ketchup}, if $\mu$ is a smooth form, then 
\begin{equation}\label{tunna}
\hat s_{k-n}(N_X,\mu)=\hat s_{k-n}(N_X)\wedge i^*\mu. 
\end{equation}
In particular, if $X$ is a hypersurface with associated line bundle $L$, then $\hat s_{k-1}(N_X,\mu)=\hat c_1(L|_{X}^*)^{k-1}\wedge i^*\mu$
since then $N_X=L|_X$.
The equality \eqref{tunna} follows from \eqref{smaskens}, \eqref{korv}, and Example~\ref{grillad} since locally there always are $\Psi$ and $E$ as in 
Proposition~\ref{ketchup}. \qed
\end{example}

\begin{proposition}\label{besikta}
Let $Z$ be a complex manifold and $\Psi$ a holomorphic section of a Hermitian vector bundle $E\to Z$.
Assume that $\Psi$ defines a divisor $D$ and let $L$ be the associated line bundle equipped with the Hermitian metric
induced by \eqref{rov}. 
If $\mu\in\PS(Z)$ is closed and of positive degree, then for $k\geq 1$, there are currents $\nu_k$ in $Z$ such that
\begin{equation*}
M_k^\Psi\wedge\mu = -\hat c_1(L^*)^k\wedge\mathbf{1}_{Z\setminus |D|}\mu +
d\nu_k.
\end{equation*}
\end{proposition}

\begin{proof}
Let $\chi_\epsilon=\chi(|\Psi|^2/\epsilon)$. By \eqref{universeum}, a straightforward calculation, and using that $d\mu=0$, we get
\begin{eqnarray*}
M_k^\Psi\wedge\mu &=& 
\lim_{\epsilon\to 0} \debar\chi_\epsilon\wedge\frac{\partial\log|\Psi|^2}{2\pi i}\wedge (dd^c\log|\Psi|^2)^{k-1}\wedge\mu \\
&=& 
\lim_{\epsilon\to 0} d\chi_\epsilon\wedge d^c\log|\Psi|^2\wedge (dd^c\log|\Psi|^2)^{k-1}\wedge\mu \\
&=&
\lim_{\epsilon\to 0} -\chi_\epsilon (dd^c\log|\Psi|^2)^{k}\wedge\mu
+ d\big(\chi_\epsilon d^c\log|\Psi|^2\wedge (dd^c\log|\Psi|^2)^{k-1}\wedge\mu\big).
\end{eqnarray*}
Using \eqref{pl} and \eqref{restr} thus
\begin{eqnarray*}
M_k^\Psi\wedge\mu &=& \lim_{\epsilon\to 0} -\chi_\epsilon (-\hat c_1(L))^{k}\wedge\mu
+ d\big(\chi_\epsilon d^c\log|\Psi|^2\wedge (-\hat c_1(L))^{k-1}\wedge\mu\big) \\
&=&  
-\hat c_1(L^*)^{k}\wedge \mathbf{1}_{Z\setminus |D|}\mu
+ \lim_{\epsilon\to 0} d\big(\chi_\epsilon d^c\log|\Psi|^2\wedge \hat c_1(L^*)^{k-1}\wedge\mu\big).
\end{eqnarray*}

We claim that there are current $\nu_k$ in $Z$ such that
$$
\chi_\epsilon d^c\log|\Psi|^2\wedge \hat c_1(L^*)^{k-1}\wedge\mu\to\nu_k, \quad\epsilon\to 0.
$$
Taking the claim for granted the proposition immediately follows. 
To show the claim, since $\hat c_1(L^*)$ is smooth it suffices to see that $\lim_{\epsilon\to 0}\chi_\epsilon d^c\log|\Psi|^2\wedge\mu$ exists.
By Lemma~\ref{kontakt}, since $Z$ is a manifold, $\mu$ is pseudomeromorphic.
It therefore follows by Lemma~\ref{lillam}, Lemma~\ref{asmpm}, and \eqref{asmpm2} that
\begin{equation*}
\lim_{\epsilon\to 0} \chi_\epsilon \partial\log|\Psi|^2\wedge\mu
\end{equation*}
exists and is a pseudomeromorphic current. By \eqref{psbarps} and Lemma~\ref{kontakt}
also $\bar\mu$ is pseudomeromorphic, and so
\begin{equation*}
\lim_{\epsilon\to 0} \chi_\epsilon \debar\log|\Psi|^2\wedge\mu
= \lim_{\epsilon\to 0}  \overline{\chi_\epsilon\partial\log|\Psi|^2\wedge\bar\mu}
\end{equation*}
exists and is the conjugate of a pseudomeromorphic current. Since $d^c=(\partial-\debar)/4\pi i$ 
it follows that $\lim_{\epsilon\to 0}\chi_\epsilon d^c\log|\Psi|^2\wedge\mu$ exists, which shows the claim.
\end{proof}



\section{Gysin mappings}\label{gysinsektion}
Let $i\colon X\to Z$ be an embedding of a complex $m$-dimensional manifold $X$ into an $m+n$-dimensional complex manifold $Z$. 
Suppose that the normal bundle $N_{X}\to i(X)$ of the submanifold $i(X)$ is equipped with a Hermitian metric.
Notice that $\text{rank}\, N_X=\text{codim}\, i(X)=n$. 

Associated with the embedding $i\colon X\to Z$ there is a Gysin mapping 
$\mathcal{A}_k(Z)\to\mathcal{A}_{k-n}(X)$, where $\mathcal{A}_k$ is the Chow group
of $k$-cycles modulo rational equivalence; see, e.g., \cite[Section~6]{Fulton}. 
In \cite{AESWY1} are introduced generalized cycles, $\mathcal{GZ}$,
which are $\PS$-currents,
and a certain quotient group 
$\mathcal{B}$ of $\mathcal{GZ}$,
which can be thought of as an analogue of the Chow group. 
If $i(X)$ can be defined by a global holomorphic section of some vector bundle,
then there are also Gysin mappings $\mathcal{GZ}_k(Z)\to\mathcal{GZ}_{k-n}(X)$
and $\mathcal{B}_k(Z)\to\mathcal{B}_{k-n}(X)$
analogous to the one in \cite{Fulton}.

Let $\mu\in\PS(Z)$. Recall from Proposition~\ref{ketchup} that there are $\hat s_{k-n}(N_X,\mu)\in\PS(X)$ since $N_X$ is equipped
with a Hermitian metric. We define Gysin mappings 
$i^!\colon \PS(Z)\to\PS(X)$ and $i^{!!}\colon \PS(Z)\to\PS(X)$ 
by
\begin{equation}\label{gysin}
i^!\mu = \sum_{k=0}^n i^*\hat c_{n-k}(N_X)\wedge\hat s_{k-n}(N_X,\mu),
\end{equation}
\begin{equation}\label{fullgysin}
i^{!!}\mu = i^*\hat c(N_X)\wedge\hat s(N_X,\mu).
\end{equation}
The mapping \eqref{gysin} preserves bidegree and
is the basis of our pullback. 
It is straightforward to check that \eqref{gysin} is the component of 
the ``full'' Gysin mapping \eqref{fullgysin} of the same bidegree as $\mu$; 
this makes \eqref{fullgysin} useful in some calculations. 

\begin{example}\label{lokal}
Suppose that there is a global holomorphic section $\Psi$ of a Hermitian vector bundle $E\to Z$ defining $i(X)$
and that the induced embedding $N_X\hookrightarrow E|_{i(X)}$ (cf.\ \eqref{elsa}) is Hermitian. Then
in view of Proposition~\ref{ketchup} and \eqref{projformel}, 
\begin{equation}\label{Mgysin}
i_*i^!\mu = \sum_{k=0}^n \hat c_{n-k}(N_X)\wedge M_k^\Psi\wedge\mu,
\end{equation}
\begin{equation}\label{fullMgysin}
i_*i^{!!}\mu = \hat c(N_X)\wedge M^\Psi\wedge\mu.
\end{equation}
It follows that
the restriction of $i^!$ to 
$\mathcal{GZ}(Z)$ is the Gysin mapping in \cite[Eq.\ (1.9)]{AESWY1}.

Recall from the proof of Proposition~\ref{ketchup} above that locally in $Z$ there always are $\Psi$ and $E$ with $\Psi$ defining $i(X)$. \qed
\end{example}

\begin{example}\label{kaffe}
Let $\mu\in\PS(Z)$ and assume that $\text{supp}\,\mu\subset i(X)$. Then 
\begin{equation}\label{salt}
 \mu=i_*\hat s_{-n}(N_X,\mu) \quad \text{and} \quad
i^!\mu = i^*\hat c_n(N_X)\wedge\hat s_{-n}(N_X,\mu).
\end{equation}
These are local statements so
we can assume that there are $\Psi$ and $E$ as in Example~\ref{lokal}. 
Since $\text{supp}\,\mu\subset i(X)$ it follows by \eqref{gurkmajo} and \eqref{universeum} that
\begin{equation*}
M_0^\Psi\wedge\mu=\mathbf{1}_X\mu=\mu, \quad \quad M_k^\Psi\wedge\mu=0, k\geq 1.
\end{equation*}
Thus \eqref{salt} follows in view of \eqref{gysin} and Proposition~\ref{ketchup}. \qed
\end{example}

\begin{proposition}\label{gyProp1}
The Gysin mappings  $i^{!}$ and $i^{!!}$ are linear mappings $\mathcal{PS}(Z)\to\mathcal{PS}(X)$
and commute with $d$, $\debar$, and $\partial$.
If $\varphi$ is a smooth form in $Z$, then $i^{!}\varphi=i^{!!}\varphi=i^*\varphi$ and 
if $\mu\in\mathcal{PS}(Z)$, then
\begin{equation}\label{hoho}
i^{!} (\varphi\wedge\mu)=i^*\varphi\wedge i^{!} \mu, \quad
i^{!!} (\varphi\wedge\mu)=i^*\varphi\wedge i^{!!} \mu.
\end{equation}
\end{proposition}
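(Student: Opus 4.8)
The plan is to prove each assertion by reducing to the local situation described in Remark~\ref{vattna}, where $\mu$ has a global representation $g_*\alpha$ and $\mathcal{I}$ is cut out by a global section $\Psi$ of a vector bundle $F\to Z$. In this setting the formulas \eqref{nykterGysin} and \eqref{fullGysin} express $i^!\mu$ and $i^{!!}\mu$ directly in terms of $h_*(\hat s(\J,V)\wedge j^*\alpha)$, and the key fact \eqref{snabbmat} identifies $i_*$ of this current with the pseudosmooth current $M^\Psi\wedge\mu$. Pseudosmoothness of $i^!\mu$ and $i^{!!}\mu$ then follows because $i^*\hat c(N_{i(X)}Z)$ is smooth on $X$ and wedging a smooth form with a pushforward of a pseudosmooth current preserves pseudosmoothness, as noted after the comment following Definition~\ref{QSdef}; linearity is immediate from the construction.

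For the commutation with $d$, $\debar$, and $\partial$, first I would observe that $i^*\hat c(N_{i(X)}Z)$ is a smooth \emph{closed} form, so by the Leibniz rule $d$ passes through the factor $i^*\hat c(N_{i(X)}Z)$ untouched. It therefore suffices to show that $d$ commutes with $\mu\mapsto h_*(\hat s(\J,V)\wedge j^*\alpha)$, and by \eqref{snabbmat} and the injectivity of $i_*$ this reduces to checking that $d$ commutes with $\mu\mapsto M^\Psi\wedge\mu$. The cleanest route is to use the analytic-continuation description in Remark~\ref{sylt}: the regularized current $M^{\Psi,\lambda}\wedge\mu$ is, for $\mathrm{Re}\,\lambda\gg 0$, built from smooth objects times $\mu$ together with genuine $\debar$ of such objects, and $d$ (resp.\ $\debar$, $\partial$) commutes with multiplication by the smooth regularizing factors and with $\mu\mapsto d\mu$; taking the value at $\lambda=0$ of the analytically continued identity yields $d(M^\Psi\wedge\mu)=M^\Psi\wedge d\mu$, and likewise for $\debar$ and $\partial$ after separating bidegrees. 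Passing back through \eqref{snabbmat} gives the commutation for $i^!$ and $i^{!!}$.

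For the statements about smooth forms, if $\varphi$ is smooth on $Z$ then it has the trivial representation $g=\mathrm{id}$, $\alpha=\varphi$, so $\J=\mathcal{I}$, $V=Z$, $W=i(X)$, $h=\mathrm{id}$, and $\hat s(\J,Z)=\hat s(N_{i(X)}Z)$ by Proposition~\ref{hasamprop}. Then \eqref{fullGysin} reads $i^{!!}\varphi=i^*\hat c(N_{i(X)}Z)\wedge i^*\hat s(N_{i(X)}Z)\wedge i^*\varphi$, which by the pullback compatibility \eqref{pust} and the identity \eqref{mourougane}, namely $\hat c(N_{i(X)}Z)\wedge\hat s(N_{i(X)}Z)=1$, collapses to $i^*\varphi$; taking the appropriate bidegree component gives $i^!\varphi=i^*\varphi$ as well. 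For the projection formula \eqref{hoho}, I would use the representation $\varphi\wedge\mu=g_*(g^*\varphi\wedge\alpha)$ that follows from the projection formula \eqref{projformel}: inserting $g^*\varphi\wedge\alpha$ in place of $\alpha$ in \eqref{nykterGysin} produces $j^*(g^*\varphi\wedge\alpha)=h^*i^*\varphi\wedge j^*\alpha$ (using $g\circ j=i\circ h$), and then the projection formula for $h_*$ together with the smoothness of $i^*\varphi$ pulls $i^*\varphi$ outside $h_*$, yielding $i^*\varphi\wedge\big(i^*\hat c(N_{i(X)}Z)\wedge h_*(\hat s(\J,V)\wedge j^*\alpha)\big)$, which is exactly $i^*\varphi\wedge i^{!!}\mu$; the $\{\cdot\}_{p,q}$ version follows since $i^*\varphi$ has pure bidegree.

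The main obstacle I expect is the commutation with $d$: one must justify that the limit (equivalently, analytic-continuation) definition of $M^\Psi\wedge\mu$ is compatible with the exterior derivative, since $d$ and the regularization limit do not obviously commute for currents of this type. The safeguard is to work at the level of the analytically continued family $M^{\Psi,\lambda}\wedge\mu$, where everything is smooth enough to differentiate term by term, and only then specialize to $\lambda=0$, invoking the results of Remark~\ref{sylt} and \cite[Proposition~4.1]{MatsLelong}; the remaining bookkeeping of bidegrees to separate $d$ into $\partial$ and $\debar$ is routine.
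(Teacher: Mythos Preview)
Your arguments for pseudosmoothness, for $i^{!!}\varphi=i^*\varphi$ when $\varphi$ is smooth, and for the projection formula \eqref{hoho} are essentially the paper's and are correct.

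The gap is in your argument for commutation with $d$. Your claim that ``$d$ commutes with multiplication by the smooth regularizing factors'' is simply false: for $\mathrm{Re}\,\lambda\gg 0$ the current $M^{\Psi,\lambda}\wedge\mu$ has the form $A_\lambda\wedge\mu$ with $A_\lambda$ a smooth form of even degree, and the Leibniz rule gives $d(A_\lambda\wedge\mu)=dA_\lambda\wedge\mu+A_\lambda\wedge d\mu$. The extra term $dA_\lambda\wedge\mu$ is not zero for generic $\lambda$, and showing that its analytic continuation vanishes at $\lambda=0$ is a nontrivial computation. If you try to carry it out you are led back to proving that $M^{g^*\Psi}$ is $d$-closed on $V$, which is precisely the content you were hoping to sidestep by passing through $M^\Psi$. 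So the route via Remark~\ref{sylt} is at best circular, and as written the step does not go through.

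The paper's argument is far more direct and avoids $M^\Psi$ entirely. Since $d\mu=g_*(d\alpha)$, the current $i^{!!}d\mu$ is computed from the same diagram \eqref{eq:tomte} with $\alpha$ replaced by $d\alpha$. One then observes from \eqref{def:segrecurr} that $\hat s(\J,V)$ is $d$-closed, being a pushforward of pullbacks of powers of a Chern form. Hence
\[
i^{!!}d\mu=i^*\hat c(N_{i(X)}Z)\wedge h_*\big(\hat s(\J,V)\wedge j^*d\alpha\big)
=i^*\hat c(N_{i(X)}Z)\wedge d\,h_*\big(\hat s(\J,V)\wedge j^*\alpha\big)=d\,i^{!!}\mu,
\]
using only that $d$ commutes with $j^*$ on smooth forms and with $h_*$ on currents, and that $i^*\hat c(N_{i(X)}Z)$ is closed. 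The same computation works verbatim for $\debar$ and $\partial$. The point you missed is that the closedness of $\hat s(\J,V)$ makes the commutation a one-line consequence of the defining formula \eqref{fullGysin}; no appeal to $M^\Psi$, \eqref{snabbmat}, or analytic continuation is needed.
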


\begin{proof}
By Proposition~\ref{ketchup}, $\mu\mapsto \hat s_{k-n}(N_X,\mu)$ are mappings $\mathcal{PS}(Z)\to\mathcal{PS}(X)$.
By \eqref{gysin} and \eqref{fullgysin}, since $\PS(X)$ is closed under multiplication by smooth forms,
$i^{!}$ and $i^{!!}$ are mappings $\mathcal{PS}(Z)\to\mathcal{PS}(X)$ as well.

Locally in $Z$ there are $\Psi$ and $E$ as in Example~\ref{lokal}; we will use this
to show the rest of the statements of the proposition.
Since $\mu\mapsto M_k^\Psi\wedge\mu$
are linear it follows from Proposition~\ref{ketchup}
that $\mu\mapsto \hat s_{k-n}(N_X,\mu)$ are linear. Thus $i^{!}$ and $i^{!!}$ are linear.
Moreover, using \eqref{fullMgysin}, that Chern forms are closed, and  Lemma~\ref{gurka} we have 
\begin{equation*}
i_*di^{!!}\mu = di_*i^{!!}\mu=d(\hat c(N_X)\wedge M^\Psi\wedge\mu)
=\hat c(N_X)\wedge M^\Psi\wedge d\mu = i_*i^{!!}d\mu.
\end{equation*}
Since $i_*$ is injective it follows that $di^{!!}\mu=i^{!!}d\mu$. Then $di^{!}\mu=i^{!}d\mu$ follows by taking the component of the
right bidegree. The same calculations can be done with $d$ replaced by $\debar$ and $\partial$.

Let $\varphi$ be a smooth form in $Z$. Then by \eqref{korv}, Example~\ref{grillad}, and \eqref{projformel} 
\begin{equation*}
M^\Psi\wedge\varphi = \varphi\wedge M^\Psi\wedge 1 = \varphi\wedge \hat s(N_X)\wedge [X] 
=\varphi\wedge \hat s(N_X)\wedge i_*1 = \hat s(N_X)\wedge i_*i^*\varphi.
\end{equation*}
By \eqref{fullMgysin} and \eqref{cs1} thus,
\begin{equation*}
i_*i^{!!}\varphi = \hat c(N_X)\wedge M^\Psi\wedge\varphi
=\hat c(N_X)\wedge\hat s(N_X)\wedge i_*i^*\varphi= i_*i^*\varphi. 
\end{equation*}
Since $i_*$ is injective it follows that $i^{!!}\varphi=i^*\varphi$ and so, by taking the component of the right bidegree,  $i^{!}\varphi=i^*\varphi$.
If $\mu\in\PS(Z)$, by \eqref{korv} and \eqref{projformel} we have
\begin{equation*}
i_*i^{!!}(\varphi\wedge\mu) = \hat c(N_X)\wedge M^\Psi\wedge (\varphi\wedge\mu)
=\varphi \wedge \hat c(N_X)\wedge M^\Psi\wedge\mu = \varphi\wedge i_*i^{!!}\mu =i_*(i^*\varphi\wedge i^{!!}\mu).
\end{equation*}
It follows that $i^{!!}(\varphi\wedge\mu)=i^*\varphi\wedge i^{!!}\mu$ and so $i^{!}(\varphi\wedge\mu)=i^*\varphi\wedge i^{!}\mu$
as before. This completes the proof.
\end{proof}

Since by this proposition $i^!\varphi=i^*\varphi$  if $\varphi$ is a smooth form in $Z$,
it follows that $i_*i^!\varphi=[i(X)]\wedge \varphi$. For a general $\mu\in\PS(Z)$ we
define
\begin{equation}\label{diagprod}
[i(X)]\wedge\mu:=i_*i^!\mu.
\end{equation}

\section{The pullback operation}\label{pb-op-sec}
Throughout this section we use the following notation and setup.
Let $X$ be a complex $m$-dimensional manifold, $Y$ a complex $n$-dimensional Hermitian manifold, and
$f\colon X\to Y$ a holomorphic mapping. 
We let $Z=X\times Y$ and $\pi_1\colon Z\to X$ and $\pi_2\colon Z\to Y$ are
the natural projections. Let also $i\colon X\to Z$, $i(x)=(x,f(x))$, be the graph embedding.
The normal bundle $N_X\to i(X)$ of $i(X)$ in $Z$ is naturally isomorphic with $TY$ via
\begin{equation}\label{roffe}
N_X=\pi_2^* TY|_{i(X)}.
\end{equation}
Since $Y$ is Hermitian, $TY$ has a Hermitian metric and we equip $N_X$ with the metric induced by \eqref{roffe}.

\subsection{Definition and basic properties}
Let $\mu\in\mathcal{PS}(Y)$. Since $\pi_2$ is a projection, $\pi_2^*\mu=1\otimes \mu$ is a well-defined current in $Z$; cf.\ \eqref{rast}. 
Let us check that $\pi_2^*\mu\in\mathcal{PS}(Z)$. This is a local statement so we can assume that $\mu=g_*\alpha$, 
where $g\colon V\to Y$ and $\alpha$ are as in Definition~\ref{PSdef}. 
Let $\{\rho_j\}$ be a locally finite partition of unity on $X$ with $\rho_j$ smooth and compactly supported.
Then 
\begin{equation}\label{lakan}
\pi_2^*\mu=1\otimes \mu=\sum_j\rho_j\otimes\mu=\sum_j (\text{id}_X\times g)_* (\rho_j\otimes \alpha),
\end{equation} 
which shows that $\pi_2^*\mu\in\mathcal{PS}(Z)$. We thus have the product
$[i(X)]\wedge\pi_2^*\mu$. Since $\pi_1\circ i=\text{id}_X$ it follows from
\eqref{diagprod} that 
$(\pi_1)_*([i(X)]\wedge\pi_2^*\mu)=(\pi_1)_*i_*i^!\pi_2^*\mu=i^!\pi_2^*\mu$. 
We use the last expression as the definition of $f^*$, cf.\ \eqref{motet}.

\begin{definition}\label{ister}
For $\mu\in\PS(Y)$ we let $f^*\mu=i^{!}\pi_2^*\mu$, 
where $i^!$ 
is the Gysin mapping \eqref{gysin}.
\end{definition}


We also introduce a ``full'' pullback mapping, $f^\diamond$, using the full Gysin mapping \eqref{fullgysin} by 
\begin{equation}\label{urk}
f^{\diamond}\mu = i^{!!}\pi_2^*\mu.
\end{equation}
Since $i^!\pi_2^*\mu$ is the component of $i^{!!}\pi_2^*\mu$ of the same bidegree as $\pi_2^*\mu$ it follows that
$f^*\mu$ is the component of $f^{\diamond}\mu$ of the same bidegree as $\mu$. 
This makes $f^{\diamond}$ convenient to use in some calculations. 


\begin{proposition}\label{kruka}
The operation $f^*$ is a linear mapping $\mathcal{PS}(Y)\to\mathcal{PS}(X)$ and commutes with $d$, $\debar$, and $\partial$.
If $\varphi$ is a smooth form in $Y$ and $\mu\in\mathcal{PS}(Y)$, then $f^*\varphi$ 
is the usual pullback and
\begin{equation}\label{uggla}
f^* (\varphi\wedge\mu) = f^*\varphi\wedge f^*\mu. 
\end{equation}
\end{proposition}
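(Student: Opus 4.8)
The plan is to reduce everything about $f^*$ to the corresponding statements about the Gysin mapping $i^!$ that were already established in Proposition~\ref{gyProp1}, together with the facts about $\pi_2^*$ recorded in \eqref{lakan} and \eqref{rast}. Since $f^*\mu = i^! \pi_2^*\mu$ by Definition~\ref{ister}, and since $\pi_2^*$ is linear (being the usual pullback under a submersion) while $i^!$ is linear by Proposition~\ref{gyProp1}, the linearity of $f^*$ is immediate. The statement that $f^*$ maps into $\mathcal{PS}(X)$ follows because $\pi_2^*\mu \in \mathcal{PS}(Z)$ by \eqref{lakan} and $i^!$ maps $\mathcal{PS}(Z)$ to $\mathcal{PS}(X)$ by Proposition~\ref{gyProp1}.

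\textbf{Commutation with $d$, $\debar$, $\partial$.} Here the plan is to observe that $\pi_2^*$ commutes with these operators, since $\pi_2$ is a submersion and pullback of currents under a submersion is defined by duality against pushforward of test forms, which intertwines the differentials; concretely, $\pi_2^*d\mu = d\pi_2^*\mu$ and similarly for $\debar$ and $\partial$. Combined with the fact from Proposition~\ref{gyProp1} that $i^!$ commutes with $d$, $\debar$, $\partial$, we obtain for instance
\begin{equation*}
f^* d\mu = i^!\pi_2^* d\mu = i^! d\pi_2^*\mu = d\, i^!\pi_2^*\mu = d f^*\mu,
\end{equation*}
and the arguments for $\debar$ and $\partial$ are identical.

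\textbf{The smooth case and the projection formula.} For a smooth form $\varphi$ on $Y$, the pullback $\pi_2^*\varphi$ is the ordinary pullback of a smooth form (it is just $1\otimes\varphi$), so it is smooth on $Z$, and then $i^!\pi_2^*\varphi = i^*\pi_2^*\varphi$ by the smooth-form part of Proposition~\ref{gyProp1}. Since $i^*\pi_2^*\varphi = (\pi_2\circ i)^*\varphi = f^*\varphi$ by functoriality of pullback on smooth forms (using $\pi_2\circ i = f$ from the graph embedding), we recover the standard pullback. For \eqref{uggla} the plan is to use that $\pi_2^*(\varphi\wedge\mu) = \pi_2^*\varphi \wedge \pi_2^*\mu$, where $\pi_2^*\varphi$ is smooth, and then apply the projection-formula identity \eqref{hoho} of Proposition~\ref{gyProp1}:
\begin{equation*}
f^*(\varphi\wedge\mu) = i^!\big(\pi_2^*\varphi\wedge\pi_2^*\mu\big) = i^*\pi_2^*\varphi\wedge i^!\pi_2^*\mu = f^*\varphi\wedge f^*\mu,
\end{equation*}
where the last step again uses $i^*\pi_2^* = f^*$ on smooth forms.

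\textbf{The main obstacle} I expect is the justification that $\pi_2^*(\varphi\wedge\mu) = \pi_2^*\varphi\wedge\pi_2^*\mu$ as currents, since $\varphi\wedge\mu$ is a current and one must verify that pullback under the submersion $\pi_2$ is compatible with wedging by a smooth form. This should follow cleanly from the duality definition of $\pi_2^*$ together with the identification \eqref{rast}, namely $\pi_2^*\nu = 1\otimes\nu$ for any current $\nu$ on $Y$, which reduces the identity to the obvious factorization $1\otimes(\varphi\wedge\mu) = (1\otimes\varphi)\wedge(1\otimes\mu)$ on the product. The remaining steps are formal once this compatibility and the already-proven properties of $i^!$ are in hand.
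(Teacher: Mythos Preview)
Your proposal is correct and follows essentially the same approach as the paper: reduce everything to Proposition~\ref{gyProp1} via $f^*=i^!\pi_2^*$, using that $\pi_2^*$ preserves pseudosmoothness, commutes with $d,\debar,\partial$, and satisfies $\pi_2^*(\varphi\wedge\mu)=\pi_2^*\varphi\wedge\pi_2^*\mu$. The paper's proof is in fact slightly terser on the last identity (it simply asserts it), whereas you correctly justify it via the tensor-product description \eqref{rast}.
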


\begin{proof}
In view of \eqref{rast}, $\pi_2^*$ is a linear mapping $\mathcal{PS}(Y)\to\mathcal{PS}(Z)$ and so it follows from Proposition~\ref{gyProp1} that
$f^*$ 
is a linear mapping $\mathcal{PS}(Y)\to\mathcal{PS}(X)$.
Moreover, 
$\pi_2^*d\mu=d\pi_2^*\mu$ 
and similarly for $\debar$ and $\partial$. 
By Proposition~\ref{gyProp1} thus  $f^*$ 
commutes with $d$, $\debar$, and $\partial$.

Let $\varphi$ be a smooth form in $Y$. Then $\pi_2^*\varphi$ is smooth in $Z$ and so, by Proposition~\ref{gyProp1},
\begin{equation*}
i^!\pi_2^*\varphi=i^*\pi_2^*\varphi=(\pi_2\circ i)^*\varphi. 
\end{equation*}
Since $\pi_2\circ i=f$ thus $i^!\pi_2^*\varphi=f^*\varphi$ is the usual pullback.
In view of \eqref{rast} we have $\pi_2^*(\varphi\wedge\mu)=\pi_2^*\varphi\wedge\pi_2^*\mu$. Hence, \eqref{uggla}
follows from \eqref{hoho}.
\end{proof}

We will now see that if $Y$ is good, then \eqref{alt12} holds. 
Recall that $Y$ is good means that  there is a holomorphic section $\Phi$ of a holomorphic vector bundle
$F\to Y\times Y$ defining the diagonal $\Delta\subset Y\times Y$.
Then $\Psi:=(f\times \text{id}_Y)^*\Phi$ is a holomorphic section of 
$E:=(f\times \text{id}_Y)^*F$ defining $i(X)$ in $Z$.
We choose a Hermitian metric on $E$ such that the induced embedding 
$
N_X\hookrightarrow  E|_{i(X)},
$
is a Hermitian. 
Hence, when $Y$ is good we are in the setting of Example~\ref{lokal}.
Using that $\text{id}_X=\pi_1\circ i$ it thus follows from \eqref{Mgysin} that
\begin{equation}\label{kattmisse}
f^*\mu = i^!\pi_2^*\mu = (\text{id}_X)_*i^!\pi_2^*\mu  = (\pi_1)_* i_*i^!\pi_2^*\mu
= (\pi_1)_*\sum_{k=0}^n \hat c_{n-k}(N_X)\wedge M_k^\Psi\wedge\pi_2^*\mu.
\end{equation}
Now, $\pi_2|_{i(X)}=f\circ\pi_1|_{i(X)}$ so we can replace $\pi_2^*$ in \eqref{roffe} by $\pi_1^*f^*$.
Since the Hermitian metric on $N_X$ is induced by \eqref{roffe} it follows by functoriality of Chern forms that 
\begin{equation}\label{fotball}
\hat c(N_X) =  \pi_1^*f^*\hat c(TY)|_{i(X)}.
\end{equation}
Replacing $\hat c_{n-k}(N_X)$ in \eqref{kattmisse} by $\pi_1^*f^*\hat c_{n-k}(TY)$ and 
using \eqref{projformel} thus \eqref{alt12} follows.
In the same way one can check that 
\begin{equation}\label{alt11}
f^\diamond\mu = f^*\hat c(TY)\wedge (\pi_1)_*\big(M^\Psi\wedge\pi_2^*\mu).
\end{equation}

\begin{example}\label{strutt}
Let $Y=\C^2$, with $TY$ equipped with the standard Hermitian metric, and let $f\colon X\to Y$ be the blowup of $0$.
Let us show that if $\mu$ is the Dirac mass at $0$ (considered as a $(2,2)$-current),
then
\begin{equation}\label{pokemon}
f^*\mu=\omega\wedge [D], \quad \quad f^\diamond\mu=\omega\wedge [D] +[D],
\end{equation}
where $D\simeq \mathbb{P}^1$ is the exceptional divisor of the blowup and $\omega$ is the standard Fubini--Study
metric form on that $\mathbb{P}^1$. 

Notice first that $\pi_2^*\mu=g_*1$, where $g\colon X\to X\times Y$ is the mapping $g(x)=(x,0)$.
We consider the tuple $\Psi=f(x)-y$ as a section of the trivial rank-$2$ bundle $E\to X\times Y$
and equip $E$ with the standard Hermitian metric. Clearly, $\Psi$ defines the graph of $f$ in $X\times Y$.
One can check that the metric induced on $N_X$ by the embedding $N_X\hookrightarrow E|_{i(X)}$ 
is the same as the one induced by
$N_X=\pi_2^*TY|_{i(X)}$. We can thus use 
\eqref{alt11} to calculate 
$f^\diamond\mu$.
Notice that $\hat c(TY)=1$ since $TY$ has the standard metric. By \eqref{alt11} thus
\begin{equation}\label{grisfot}
f^\diamond\mu=(\pi_1)_*\big(M^\Psi\wedge\pi_2^*\mu).
\end{equation}
Since $\pi_2^*\mu=g_*1$, $g^*\Psi=f(x)$, and $f$ defines $D$ it follows from \eqref{projformel}
and Example~\ref{grillad} that
\begin{eqnarray}\label{sylta}
(\pi_{1})_*(M^\Psi\wedge\pi_2^*\mu) &=& (\pi_{1})_*g_* (M^{g^*\Psi}\wedge 1) = (\pi_{1})_*g_* M^{f(x)} \\
&=&
(\pi_{1})_*g_* \big([D] + [D]\wedge \hat c_1(L^*)\big), \nonumber
\end{eqnarray}
where $L$ is the line bundle corresponding to $D$ equipped with the metric induced by $L\hookrightarrow g^*E$.
The embedding $L|_D\hookrightarrow g^*E|_D$ is the standard embedding of $\mathcal{O}(-1)$ into 
$\mathbb{P}^1\times\mathbb{C}^2$ and so $\hat c_1(L^*)|_D=\omega$. Since 
$\pi_1\circ g=\text{id}_X$, the second equality in \eqref{pokemon} thus follows 
from \eqref{grisfot} and \eqref{sylta}. The first equality in \eqref{pokemon} then follows by taking the right bidegree. \qed
\end{example}

\begin{remark}
Let $f\colon X\to Y$ be the blowup of $0\in\mathbb{C}^2=Y$ 
and $\mu$ the Dirac mass at $0$ as in Example~\ref{strutt}. 
Then no reasonable pullback of currents under $f$ can be continuous. To see this, let $a_j$ be a sequence in $Y$
such that $a_j\to 0$. If $\mu^{a_j}$ is the Dirac measure at $a_j$, then $\mu^{a_j}\to\mu$ as currents.
Since $f$ is a biholomorphism outside $f^{-1}(0)$ we have that $f^*\mu^{a_j}$ is the Dirac mass at $f^{-1}(a_j)$ 
if $f^*$ is a reasonable pullback.
If $a_j$ and $b_j$ are sequences going to $0$ along different lines in $Y$, then $f^{-1}(a_j)$ and $f^{-1}(b_j)$
have different limits in $X$. Hence, $\mu^{a_j}\to\mu$ and $\mu^{b_j}\to\mu$, but $f^*\mu^{a_j}$ and $f^*\mu^{b_j}$ have different limits. \hfill $\qed$
\end{remark}

\begin{example}\label{ex:id}
Let  $f\colon X\to Y$ be the inclusion of an open set $X\subset Y$. Then, for 
any $\mu\in\PS(Y)$ we have $f^*\mu=f^{\diamond}\mu=\mu|_X$. 
To see this, we will show that
\begin{equation}\label{lur}
i_*f^{\diamond}\mu=i_*(\mu|_X);
\end{equation} 
recall that $i\colon X\to Z$ is the graph embedding, which in this case is $i(x)=(x,x)$.
Since $i_*$ is injective we get $f^{\diamond}\mu=\mu|_X$ from \eqref{lur}. Then $f^*\mu=\mu|_X$
follows by taking the right bidegree.


It suffices to verify \eqref{lur} in a neighborhood of a point $i(x)=(x,x)\in Z$, $x\in X$.
To this end we can assume that $X=Y$. We can also assume that there is a holomorphic section $\Psi$ 
of a Hermitian vector bundle $E\to Z$ defining $i(X)$ such that the induced embedding $N_X\hookrightarrow E|_{i(X)}$
is Hermitian; cf.\ Example~\ref{lokal}. By \eqref{urk} and \eqref{fullMgysin} thus
\begin{equation}\label{blaa}
i_*f^\diamond\mu=\hat c(N_X)\wedge M^\Psi\wedge\pi_2^*\mu.
\end{equation}

Our considerations are local so we can assume that $\mu=g_*\alpha$, where $g\colon V\to Y$ and $\alpha$ are as in Definition~\ref{PSdef}.
Then
\begin{equation}\label{blaaa}
\pi_2^*\mu=(\text{id}_X\times g)_*(1\otimes \alpha);
\end{equation}
cf.\ \eqref{lakan}. 
Recalling that $X=Y$, let $j\colon V\to X\times V$ be the mapping $j(v)=(g(v),v)$. Since $\Psi$ defines $i(X)$, which is the diagonal in $X\times Y$,
we have that $(\text{id}_X\times g)^*\Psi$ defines $j(V)$. Let $N_V\to j(V)$ be the normal bundle of $j(V)$ equipped with the 
Hermitian metric induced by the embedding $N_V\hookrightarrow (\text{id}_X\times g)^*E|_{j(V)}$. 
Since $\text{codim}\, j(V)=\text{dim}\, X=\text{dim}\, Y=\text{codim}\, i(X)$, it follows from Lemma~\ref{sladd} that
$N_V=(\text{id}_X\times g)^*N_X$ as Hermitian bundles. By functoriality of Chern forms thus $\hat c(N_V)=(\text{id}_X\times g)^*\hat c(N_X)$.
Hence, in view of \eqref{blaa}, \eqref{blaaa}, and \eqref{mos},
\begin{eqnarray*}
i_*f^\diamond\mu &=& (\text{id}_X\times g)_*\big((\text{id}_X\times g)^*\hat c(N_X)\wedge M^{(\text{id}_X\times g)^*\Psi}\wedge(1\otimes \alpha)\big) \\
&=&
(\text{id}_X\times g)_*\big(\hat c(N_V)\wedge M^{(\text{id}_X\times g)^*\Psi}\wedge(1\otimes \alpha)\big).
\end{eqnarray*}
By \eqref{fullMgysin} and Proposition~\ref{gyProp1} thus
\begin{equation*}
i_*f^\diamond\mu = (\text{id}_X\times g)_*j_*j^{!!}(1\otimes \alpha) = (\text{id}_X\times g)_*j_*j^{*}(1\otimes \alpha)
\end{equation*}
since $1\otimes \alpha$ is smooth. It is straightforward to check that $(\text{id}_X\times g)\circ j=i\circ g$ and that
$j^*(1\otimes\alpha)=\alpha$.
Hence, $i_*f^\diamond\mu=i_*g_*\alpha=i_*\mu$, which shows \eqref{lur}. \qed
\end{example}

\begin{proposition}\label{submersionProp}
Let $X$ be a complex manifold, $Y$ a complex Hermitian manifold, and $f\colon X\to Y$ a holomorphic mapping.
\begin{itemize}
\item[(a)] If $U\subset X$ is an open subset and $\mu\in\PS(Y)$, then  $(f^{*}\mu)|_U=f|_U^*\mu$. 

\item[(b)] Assume that $f(X)$ is a complex submanifold of $Y$ and that the induced mapping 
$\tilde f\colon X\to f(X)$ is a submersion.
Let $\iota\colon f(X)\to Y$ be the inclusion.
If $\mu\in\PS(Y)$, then $f^{*}\mu=\tilde f^*\iota^*\mu$, where $\tilde f^*$ is the standard 
pullback of currents under the submersion $\tilde f$.
\end{itemize}
\end{proposition}

\begin{proof}
(a): Recall that $N_X$ is the normal bundle of $i(X)$ in $Z=X\times Y$ equipped with the metric induced by \eqref{roffe}.
If $N_U$ is the normal bundle of $i(U)$ in $U\times Y$,
then $N_U=(N_X)|_{i(U)}$, and we equip $N_U$ with the induced metric. 
In view of Proposition~\ref{ketchup}, we have $\hat s_j(N_X,\pi_2^*\mu)|_U = \hat s_j(N_U,\pi_2^*\mu)$.
It thus follows from Definition~\ref{ister} and \eqref{gysin} that $(f^{*}\mu)|_U=f|_U^*\mu$.

(b): We can localize in both $X$ and $Y$; the statement is local in $X$ by part (a), and
in view of Proposition~\ref{kruka}, by a partition of unity in $Y$ we can assume that $\mu$ has support in any 
given open subset of $Y$.
Since $\tilde f$ is a submersion, after localization in $X$, we can assume that 
$X=X'\times X''$, where $X'$ is an open set in some $\mathbb{C}^{m'}$ and $X''$ is an open subset of $f(X)$,
such that $\tilde f\colon X'\times X''\to X''$ is the projection on the second factor. Thus, the inclusion $\iota\colon f(X)\to Y$ 
now is $X''\hookrightarrow Y$.

We will first give an expression for $\iota^\diamond\mu$.
Since we can localize in $Y$ we can assume that $Y$ is good and let $\Psi$ be a holomorphic section of 
a vector bundle $E\to Y\times Y$ such that $\Psi$ defines the diagonal $\Delta\subset Y\times Y$. 
We equip $E$ 
with a Hermitian metric so that $TY|_\Delta=N_\Delta\hookrightarrow  E|_\Delta$ is Hermitian.
Let $p_1$ and $p_2$ be the natural projections $X''\times Y\to X''$ and $X''\times Y\to Y$, respectively,
and let $j\colon X''\to X''\times Y$ be the embedding $j(x'')=(x'',\iota(x''))$.
We equip the normal bundle $N_{X''}\to j(X'')$ of $j(X'')$ in $X''\times Y$ with the metric induced by
$N_{X''}=p_2^*TY|_{j(X'')}$. The section $(\iota\times\text{id}_Y)^*\Psi$ defines $j(X'')$ and in view of Lemma~\ref{sladd},
the induced embedding $N_{X''}\hookrightarrow (\iota\times\text{id}_Y)^*E$ is Hermitian.
By \eqref{alt11} we now have
\begin{equation}\label{brun}
\iota^\diamond\mu=\iota^*\hat c(TY)\wedge (p_{1})_*(M^{(\iota\times \text{id}_Y)^*\Psi}\wedge p_2^*\mu).
\end{equation} 

We have a similar expression for $f^\diamond\mu$: The section $(f\times \text{id}_Y)^*\Psi$ defines $i(X)$ in $X\times Y$, and again in view of Lemma~\ref{sladd},
the induced embedding $N_X\hookrightarrow (f\times\text{id}_Y)^*E$ is Hermitian. By \eqref{alt11} thus
\begin{equation}\label{skal}
f^\diamond\mu=f^*\hat c(TY)\wedge (\pi_{1})_*(M^{(f\times \text{id}_Y)^*\Psi}\wedge \pi_2^*\mu).
\end{equation}
Now, since $(f\times \text{id}_Y)^*\Psi=1\otimes (\iota\times \text{id}_Y)^*\Psi$ and
$\pi_2^*\mu=1\otimes p_2^*\mu$
we have
\begin{equation}\label{banan}
M^{(f\times \text{id}_Y)^*\Psi}\wedge\pi_2^*\mu = 1\otimes \big(M^{(\iota\times \text{id}_Y)^*\Psi}\wedge p_2^*\mu\big).
\end{equation}
Moreover, since $\tilde f\colon X'\times X''\to X''$ is the standard projection,
$f^*\hat c(TY)=\tilde f^*\iota^*\hat c(TY)=1\otimes \iota^*\hat c(TY)$; cf.\ \eqref{rast}.
It thus follows by \eqref{skal}, \eqref{banan}, and \eqref{brun} that 
\begin{eqnarray*}
f^\diamond\mu &=& \big(1\otimes \iota^*\hat c(TY)\big)\wedge 
(\pi_{1})_*\big(1\otimes \big(M^{(\iota\times \text{id}_Y)^*\Psi}\wedge p_2^*\mu\big)\big) \\
&=& 1\otimes \big(\iota^*\hat c(TY)\wedge (p_{1})_*(M^{(\iota\times \text{id}_Y)^*\Psi}\wedge p_2^*\mu)\big) \\
&=& 1\otimes \iota^\diamond \mu.
\end{eqnarray*}
Taking the component of the same bidegree as $\mu$ we get $f^*\mu=1\otimes \iota^* \mu$. The right-hand side
here is the standard pullback of the current $\iota^* \mu$ under the projection 
$\tilde f\colon X'\times X''\to X''$. Part (b) of the proposition thus follows.
\end{proof}

\begin{proof}[Proof of Theorem~\ref{thm1}]
Theorem~\ref{thm1} follows from Propositions~\ref{kruka} and \ref{submersionProp}.
\end{proof}

\begin{example}\label{sib}
Let $f\colon X\to Y$ be surjective with constant fiber dimension $m-n$; recall that $m=\text{dim}\, X$ and $n=\text{dim}\, Y$.
We claim that $f^*$ then is independent of the Hermitian structure on $Y$ and that
$f^*\mu=\hat s_0(N_X,\pi_2^*\mu)$ for any $\mu\in\PS(Y)$;
recall here the currents $\hat s_j(N_X,\pi_2^*\mu)$ from Proposition~\ref{ketchup}.

To see this, we can localize in both $X$ and $Y$; cf.\ the proof of Proposition~\ref{submersionProp}.
We can thus assume that there are $\Psi$ and $E$ as in Example~\ref{lokal}.
We can also assume that $\mu=\tilde g_*\alpha$, 
where $\tilde g\colon \widetilde V\to Y$ is a holomorphic mapping, $\widetilde V$ a connected complex manifold, and
$\alpha$ a smooth compactly supported form in $\widetilde V$. Consider the fiber diagram
\eqref{eq:tomte} with $V$, $g$, and $W$ defined to be $X\times\widetilde V$, $\text{id}_X\times \tilde g$,
and $\{g^*\Psi=0\}$, respectively.
Since $f$ is surjective with constant fiber dimension $m-n$ it follows that $\text{codim}_V W=n$.
Thus, since $W=\{g^*\Psi=0\}$, it follows from \cite[Theorems~1.1, 1.2]{AESWY1} that $M_k^{g^*\Psi}=0$ if $k<n$
and that $M_n^{g^*\Psi}$ is independent of the metric. Since $\pi_2^*\mu=g_*(1\otimes \alpha)$, 
in view of \eqref{korv} and \eqref{mos} we get
$$
M_k^\Psi\wedge\pi^*_2\mu=g_*\big(M_k^{g^*\Psi}\wedge (1\otimes \alpha)\big)
=g_*\big((1\otimes \alpha)\wedge M_k^{g^*\Psi}\big).
$$ 
Hence, $M_k^\Psi\wedge\pi^*_2\mu$ vanishes if $k<n$ and is independent of the metric if 
$k=n$. It thus follows by \eqref{alt12} and Proposition~\ref{ketchup} that
$f^*$ is independent of the Hermitian structure on $Y$ and that $f^*\mu=\hat s_0(N_X,\pi_2^*\mu)$. \qed
\end{example}

\subsection{An alternative approach}\label{diagonal-alternativ}
We recall the setup; $X$ is a complex manifold, $Y$ a complex Hermitian manifold,
$f\colon X\to Y$ a holomorphic mapping, $Z=X\times Y$, and $\pi_1\colon Z\to X$ and $\pi_2\colon Z\to Y$
the natural projections. 
In this subsection we will also assume that $X$ is Hermitian and let $Z$ have the induced Hermitian structure.

Let $\iota\colon Z\to Z\times Z$ be the diagonal embedding and  $N_Z\to \iota(Z)$
the normal bundle of $\iota(Z)$ in $Z\times Z$. Then $N_Z$ is naturally isomorphic with $TZ$ and thus $N_Z$ gets an
induced Hermitian metric. The Gysin mappings $\iota^!$ and $\iota^{!!}$ are thus defined.

Recall from the introduction that the main step when defining $f^*\mu$ is to give a reasonable
meaning to the product $[i(X)]\wedge\pi_2^*\mu$, where $i(X)$ as before is the graph of $f$ in $Z$.
We have done this by applying the (local) operator(s) $M^\Psi$ to $\pi_2^*\mu$,
where $\Psi$ is a (local) holomorphic section of some Hermitian bundle
defining $i(X)$ (locally) in $Z$, and then multiplying by Chern forms.
This definition of the product $[i(X)]\wedge\pi_2^*\mu$ is not symmetric 
in $[i(X)]$ and $\pi_2^*\mu$. The objective of this section is to provide an alternative 
definition
that puts $[i(X)]$ and $\pi_2^*\mu$ on an equal footing. 
The alternative definition coincides with the product introduced in 
\cite[Definition~5.3]{AESWY2} if $\pi_2^*\mu$ is a (generalized) cycle
and $\iota(Z)$ can be defined by a global holomorphic section of some vector bundle over 
$Z\times Z$.

Before stating the main result of this subsection we notice that if 
$\Gamma\subset Z$ is a submanifold, or merely an analytic subset,  and $\mu\in\PS(Y)$, then 
$[\Gamma]\otimes \pi_2^*\mu\in\PS(Z\times Z)$.
In fact, if $\mu=g_*\alpha$,
where $g\colon V\to Y$ and $\alpha$ are as in Definition~\ref{PSdef},
then 
\begin{equation}\label{katt}
[\Gamma]\otimes \pi_2^*\mu = \tau_*(1\otimes 1\otimes\alpha)
\end{equation} 
where 
$\tau\colon \Gamma\times X\times V\to Z\times Z$, $\tau(\gamma,x,v)= (\gamma, x,g(v))$.

\begin{theorem}\label{altdef}
Let $X$ and $Y$ be complex Hermitian manifolds and
let $Z=X\times Y$ with the induced Hermitian structure.
Let $\iota\colon Z\to Z\times Z$ be the diagonal embedding
and equip the normal bundle $N_Z$ of $\iota(Z)$ with the Hermitian metric induced by
$N_Z\simeq TZ$.
Assume that $\Gamma\subset Z$ is a submanifold such that $\text{dim}\,\Gamma=\text{dim}\, X$ and
$\pi_1|_\Gamma$ is proper, where $\pi_1\colon Z\to X$ is the natural projection. Then,
with $\pi_2\colon Z\to Y$ the natural projection,
\begin{equation}\label{yster2}
(\pi_{1})_* \iota^{!} ([\Gamma]\otimes\pi_2^*\mu) = (\pi_1|_\Gamma)_* (\pi_2|_\Gamma)^{*}\mu.
\end{equation}
\end{theorem}

Notice that $(\pi_2|_\Gamma)^{*}$ depends on the Hermitian structure on $Y$ but not on the one on $X$.
The left-hand side of \eqref{yster2}, which a~priori depends on the Hermitian structures on both $X$ and $Y$,
is thus independent of the Hermitian structure on $X$.

Let us see what Theorem~\ref{altdef} means if $\Gamma=i(X)$ is the graph of the mapping $f\colon X\to Y$.
After identification of $i(X)$ and $X$ we have
$\pi_2|_\Gamma=f$
and $\pi_1|_\Gamma=\text{id}_X$, and so \eqref{yster2} becomes
\begin{equation}\label{telefjomp0}
(\pi_{1})_* \iota^{!} ([i(X)]\otimes\pi_2^*\mu) = f^*\mu.
\end{equation}
The left-hand side thus is an alternative definition of $f^*\mu$. 
The product $[i(X)]\wedge\pi_2^*\mu$ in this setting is $\iota^{!} ([i(X)]\otimes\pi_2^*\mu)$, which
clearly puts $[i(X)]$ and $\pi_2^*\mu$
on an equal footing.

To prove Theorem~\ref{altdef} we will use the following technical lemma.

\begin{lemma}\label{lma:faktor}
Let $\Sigma$ and $\Omega$ be complex manifolds and $q\colon \Sigma\to \Omega$ a holomorphic mapping. 
Let $j \colon \Sigma\to \Sigma\times \Omega$ be the graph embedding,
$N_\Sigma$ the normal bundle of $j(\Sigma)$, 
and $p\colon \Sigma\times \Omega\to \Sigma$ 
the natural projection. 
Let $\Phi_1$ be a holomorphic section of a Hermitian vector bundle $F_1\to \Sigma$ and assume that there is a holomorphic section
$\Phi_2$ of a Hermitian vector bundle
$F_2\to \Sigma\times \Omega$ defining $j(\Sigma)$ in $\Sigma\times \Omega$.  
If $N_\Sigma$ is equipped with the Hermitian metric induced by $N_\Sigma\hookrightarrow F_2|_{j(\Sigma)}$,
then for any $\mu\in\PS(\Sigma)$,
\begin{equation*}
j_*(M^{\Phi_1}\wedge\mu) = \hat c(N_\Sigma)\wedge M^{p^*\Phi_1+\Phi_2}\wedge p^*\mu.
\end{equation*}
\end{lemma}

\begin{proof}
Assume first that $\mu$ is a smooth form and that $\Phi_1$ defines a divisor $D$.
Then $p^*\Phi_1$ defines the divisor $D\times \Omega$.
The section $\Phi_2$ defines $j(\Sigma)$, which is a locally complete intersection of codimension $\text{dim}\,\Omega$.
It is clear that $D\times \Omega$ and $j(\Sigma)$ intersect properly and it follows that 
$p^*\Phi_1+\Phi_2$ defines a locally complete intersection of codimension $\text{dim}\,\Omega +1$. 
The conditions in \cite[Proposition~7.6]{AESWY1} are then satisfied and the conclusion is that
\begin{equation}\label{lut}
M^{p^*\Phi_1+\Phi_2}=M^{p^*\Phi_1}\wedge M^{\Phi_2}.
\end{equation} 
Since $\Phi_2$ defines $j(\Sigma)$, in view of Example~\ref{grillad} we have 
$M^{\Phi_2}=\hat s(N_\Sigma)\wedge j_*1$. Hence, by \eqref{lut},
\begin{equation}\label{lutfisk}
M^{p^*\Phi_1+\Phi_2}=M^{p^*\Phi_1}\wedge \hat s(N_\Sigma)\wedge j_*1.
\end{equation} 
Multiplying by $\hat c(N_\Sigma)\wedge p^*\mu$ and using \eqref{korv}, \eqref{cs1}, and \eqref{mos} we get
\begin{eqnarray*}
\hat c(N_\Sigma)\wedge M^{p^*\Phi_1+\Phi_2} \wedge p^*\mu &=&
\hat c(N_\Sigma)\wedge p^*\mu \wedge \hat s(N_\Sigma) \wedge M^{p^*\Phi_1}\wedge j_*1 \\
&=&
p^*\mu\wedge M^{p^*\Phi_1}\wedge j_*1
=j_*(j^*p^*\mu\wedge M^{j^*p^*\Phi_1}).
\end{eqnarray*}
Since $j^*p^*=(p\circ j)^*=\text{id}^*_\Sigma$ the lemma thus follows in the case when $\mu$ is smooth and 
$\Phi_1$ defines a divisor.

We now consider the general case. The statement is local in $\Sigma\times \Omega$ so we can assume that
$\mu=g_*\alpha$, where $g\colon V\to \Sigma$ and $\alpha$ are as in Definition~\ref{PSdef}.
Possibly after a modification of $V$, by Hironaka's theorem we can also assume that $g^*\Phi_1$ 
defines a divisor.

Let $\tilde j\colon V\hookrightarrow V\times\Omega$ be the embedding $\tilde j(v)=(v,q\circ g(v))$ and let
$\tilde p\colon V\times\Omega \to V$ be the natural projection.
Notice that $(g\times \text{id}_\Omega)^*\Phi_2$ defines $\tilde j(V)$ in $V\times\Omega$.
By the first part of the proof thus
\begin{equation}\label{smurf}
\tilde j_*\big(M^{g^*\Phi_1}\wedge\alpha\big) = 
\hat c(N_V)\wedge M^{\tilde p^*g^*\Phi_1+(g\times \text{id}_\Omega)^*\Phi_2}\wedge (\alpha\otimes 1),
\end{equation}
where $N_V$ is the normal bundle of $\tilde j(V)$ equipped with the metric induced by 
$N_V\hookrightarrow (g\times \text{id}_\Omega)^*F_2|_{\tilde j(V)}$. The lemma will follow by applying 
$(g\times \text{id}_\Omega)_*$ to \eqref{smurf}.

Since $(g\times \text{id}_\Omega)\circ \tilde j=j\circ g$, in view of \eqref{mos} we have
\begin{equation}\label{smurf2}
(g\times \text{id}_\Omega)_*\tilde j_*\big(M^{g^*\Phi_1}\wedge\alpha\big) =
j_*g_*\big(M^{g^*\Phi_1}\wedge\alpha\big) = j_*\big(M^{\Phi_1}\wedge\mu\big).
\end{equation}
To calculate $(g\times \text{id}_\Omega)_*$ of the right-hand side of \eqref{smurf}, notice first that
by Lemma~\ref{sladd} we have an isometry $N_V=(g\times \text{id}_\Omega)^*N_\Sigma$. We can thus replace
$\hat c(N_V)$ in the right-hand side of \eqref{smurf} by $(g\times \text{id}_\Omega)^*\hat c(N_\Sigma)$.
Since $g\circ \tilde p=p\circ (g\times \text{id}_\Omega)$ we can also replace $\tilde p^*g^*\Phi_1$
by $(g\times \text{id}_\Omega)^*p^*\Phi_1$. 
After doing this, using \eqref{projformel} and \eqref{mos} we get
\begin{multline}\label{smurf3}
(g\times \text{id}_\Omega)_*\Big(
\hat c(N_V)\wedge M^{\tilde p^*g^*\Phi_1+(g\times \text{id}_\Omega)^*\Phi_2}\wedge (\alpha\otimes 1)\Big)
= \\
\hat c(N_\Sigma)
\wedge M^{p^* \Phi_1+\Phi_2}\wedge (g\times \text{id}_\Omega)_*(\alpha\otimes 1).
\end{multline}
Since $(g\times \text{id}_\Omega)_*(\alpha\otimes 1)=\mu\otimes 1=p^*\mu$
the lemma follows by \eqref{smurf}, \eqref{smurf2}, and \eqref{smurf3}.
\end{proof}

\begin{proof}[Proof of Theorem~\ref{altdef}]
We will show that
\begin{equation}\label{rutt}
(\pi_{1})_* \iota^{!!}([\Gamma]\otimes\pi_2^*\mu) = (\pi_1|_\Gamma)_*(\pi_2|_\Gamma)^\diamond\mu.
\end{equation}
The theorem then follows by taking the component of the same bidegree as $\mu$. Let 
\begin{equation*}
T:=(\pi_2|_\Gamma)^\diamond\mu.
\end{equation*}
Then $(\pi_1|_\Gamma)_*T$ is the right-hand side of \eqref{rutt}. On the other hand,
we can factorize $\pi_1|_\Gamma$ as follows
\begin{equation*}
\Gamma \stackrel{j_1}{\longrightarrow} \Gamma\times Y \stackrel{j_2}{\longrightarrow}
\Gamma\times X\times Y \stackrel{j_3}{\longrightarrow}
Z\times X\times Y \stackrel{\pi}{\longrightarrow} X,
\end{equation*}
\begin{multline*}
j_1(\gamma)=(\gamma,\pi_2(\gamma)),\quad j_2(\gamma,y)=(\gamma,\pi_1(\gamma),y), \quad
j_3(\gamma,x,y)=(\pi_1(\gamma),
\pi_2(\gamma); x,y),\\
\pi(z;x,y)=x.
\end{multline*}
We thus have that
$
(\pi_1|_\Gamma)_*T = \pi_*(j_3)_*(j_2)_*(j_1)_*T
$
and we claim  that 
\begin{equation}\label{rutter}
\pi_*(j_3)_*(j_2)_*(j_1)_*T = (\pi_{1})_* \iota^{!!}([\Gamma]\otimes\pi_2^*\mu),
\end{equation} 
which then proves \eqref{rutt}.

It remains to show \eqref{rutter}. The claimed identity \eqref{rutter} can be checked locally
in $X$ and by a partition of unity we can assume that $\mu$ has compact support in any given open subset of $Y$.
We can thus assume that both $Y$ and $X$ are good and let $\Psi_1$ and $\Psi_2$ be holomorphic sections of vector bundles
$E_1\to Y\times Y$ and $E_2\to X\times X$ defining the diagonals $\Delta_Y\subset Y\times Y$ and 
$\Delta_X\subset X\times X$, respectively.
We equip $E_1$ and $E_2$ with Hermitian metrics such that $TY\simeq N_{\Delta_Y}\hookrightarrow E_1|_{\Delta_Y}$
and $TX\simeq N_{\Delta_X}\hookrightarrow E_2|_{\Delta_X}$ are Hermitian embeddings.

Let $\Phi_1=(\pi_2|_\Gamma\times \text{id}_Y)^*\Psi_1$ and $F_1=(\pi_2|_\Gamma\times \text{id}_Y)^*E_1$.
Then $\Phi_1$ defines $j_1(\Gamma)$ in $\Gamma\times Y$ and we let $N_\Gamma$ be the normal bundle
equipped with the metric induced by $N_\Gamma\hookrightarrow F_1|_{j_1(\Gamma)}$.
In view of Lemma~\ref{sladd}, this is the same as the metric induced by $N_\Gamma\simeq TY$.
Hence, $\hat c(N_\Gamma)=(1\otimes\hat c(TY))|_{j_1(\Gamma)}$.
Now, in view of \eqref{urk} and \eqref{rast} we have $T=(\pi_2|_\Gamma)^\diamond\mu=j_1^{!!}(1\otimes\mu)$. By \eqref{fullMgysin} thus
\begin{equation}\label{skurk}
(j_{1})_* T 
= (1\otimes\hat c(TY))\wedge M^{\Phi_1}\wedge(1\otimes \mu).
\end{equation} 

Let $F_2$ and $\Phi_2$ be the pullback of $E_2$ and $\Psi_2$, respectively,
 under the mapping $\Gamma\times X\times Y\to X\times X$, 
$(\gamma,x,y)\mapsto (\pi_1(\gamma), x)$. Then $\Phi_2$ defines $j_2(\Gamma\times Y)$, which is the graph of 
the mapping $\Gamma\times Y\to X$, $(\gamma,y)\mapsto \pi_1(\gamma)$. Let 
$p\colon \Gamma\times X\times Y\to \Gamma\times Y$ be the natural projection. Since
$1\otimes\hat c(TY)=j_2^*(1\otimes 1\otimes\hat c(TY))$ it follows by \eqref{skurk}, \eqref{projformel},
and Lemma~\ref{lma:faktor} that 
\begin{equation*}
(j_{2})_*(j_{1})_* T 
=
(1\otimes 1\otimes\hat c(TY))\wedge \hat c(N_{\Gamma\times Y})\wedge M^{p^*\Phi_1+\Phi_2}\wedge p^*(1\otimes \mu),
\end{equation*}
where the normal bundle $N_{\Gamma\times Y}$ of $j_2(\Gamma\times Y)$ is equipped with the metric induced by 
$N_{\Gamma\times Y}\hookrightarrow F_2|_{j_2(\Gamma\times Y)}$. In view of Lemma~\ref{sladd},
$
\hat c(N_{\Gamma\times Y})=(1\otimes \hat c(TX)\otimes 1)|_{j_2(\Gamma\times Y)}.
$
Thus, since $p^*(1\otimes\mu)=1\otimes 1\otimes\mu$,
\begin{equation}\label{skurk2}
(j_{2})_*(j_{1})_* T 
=
(1\otimes \hat c(TX)\otimes\hat c(TY)) \wedge M^{p^*\Phi_1+\Phi_2}\wedge (1\otimes 1\otimes\mu).
\end{equation}

From the definitions of $\Phi_1$, $\Phi_2$, $p$, and $j_3$ it is straightforward to check that
\begin{equation}\label{skurk3}
p^*\Phi_1+\Phi_2 = j_3^*(\Pi^*\Psi_1+\widetilde\Pi^*\Psi_2),
\end{equation}
where $\Pi\colon Z\times Z\to Y\times Y$ and $\widetilde\Pi\colon Z\times Z\to X\times X$
are the natural projections.
Moreover, 
since $TZ=TX\oplus TY$ we have $\hat c(TZ)=\hat c(TX)\otimes \hat c(TY)$. It follows that
\begin{equation}\label{skurk4}
1\otimes \hat c(TX)\otimes \hat c(TY) = j_3^*(1\otimes \hat c(TZ)).
\end{equation}
Replacing $p^*\Phi_1+\Phi_2$ and $1\otimes \hat c(TX)\otimes \hat c(TY)$ in \eqref{skurk2}
by the right-hand sides of \eqref{skurk3} and \eqref{skurk4}, respectively, it follows by 
\eqref{projformel} and \eqref{mos} that
$$
(j_{3})_*(j_{2})_*(j_{1})_* T =
(1\otimes \hat c(TZ))\wedge M^{\Pi^*\Psi_1+\widetilde\Pi^*\Psi_2}\wedge (j_{3})_*(1\otimes 1\otimes\mu).
$$
Since
$
(1\otimes \hat c(TZ))|_{\iota(Z)} = \hat c(N_Z)
$
and
$
(j_{3})_*(1\otimes 1\otimes\mu) = [\Gamma]\otimes\pi_2^*\mu
$
we have
\begin{equation}\label{quasimodo}
(j_{3})_*(j_{2})_*(j_{1})_* T =
\hat c(N_Z)\wedge M^{\Pi^*\Psi_1+\widetilde\Pi^*\Psi_2}\wedge ([\Gamma]\otimes\pi_2^*\mu).
\end{equation}

Now, $\Pi^*\Psi_1+\widetilde\Pi^*\Psi_2$ defines $\iota(Z)$ in $Z\times Z$ and the corresponding 
embedding $N_Z\hookrightarrow (\Pi^*E_1\oplus \widetilde\Pi^*E_2)|_{\iota(Z)}$ is Hermitian. 
In view of \eqref{quasimodo} and \eqref{fullMgysin} thus
$$
(j_{3})_*(j_{2})_*(j_{1})_* T =\iota_*\iota^{!!}([\Gamma]\otimes\pi_2^*\mu).
$$
By applying $\pi_*$, using that $\pi_*\iota_*=(\pi_{2})_*$ since $\pi\circ\iota=\pi_2$, 
thus \eqref{rutter} follows. This completes the proof.
\end{proof}

\begin{remark}\label{strudel}
As mentioned in the introduction, the alternative definition \eqref{telefjomp0} of $f^*$ 
can be generalized to define a pullback under meromorphic mappings 
and meromorphic correspondences: 

If $h\colon X\dashrightarrow Y$ is a meromorphic mapping, then the closure of the graph of $h$ is an analytic subset of $Z$, and we let 
$\Gamma$ be this analytic subset. If $\mu\in\PS(Y)$, then $[\Gamma]\otimes \pi_2^*\mu\in\PS(Z\times Z)$ and we can define
$h^*\colon \PS(Y)\to\PS(X)$ by letting
\begin{equation}\label{telefjomp}
h^*\mu := (\pi_{1})_* \iota^{!} ([\Gamma]\otimes\pi_2^*\mu).
\end{equation}

If $\Gamma$ instead is a meromorphic correspondence, i.e., $X$ and $Y$ have the same dimension $m$ and
$\Gamma$ is an effective $m$-cycle such that $\pi_1$ and $\pi_2$ restricted to each irreducible component of $|\Gamma|$
are surjective and proper, then the right-hand side of \eqref{telefjomp} makes sense and is 
a reasonable definition of a pullback of $\mu$ under the correspondence $\Gamma$. \qed
\end{remark}

\section{Compatibility with cohomology}\label{cohom}
The objective of this section is to prove Theorem~\ref{thm2}. We need a few preliminary results.
For a complex manifold $Z$ we let $H^*(Z)$ be the de Rham cohomology groups. 
Recall that $H^*(Z)$ can be defined using either closed smooth
forms or closed currents. If $\mu$ is a closed smooth form or a closed current we let 
$[\mu]_{dR}$ be the corresponding cohomology class.
If $\mu=[W]$ is the current of integration along $W\subset Z$, then we write $[W]_{dR}$ instead of $[[W]]_{dR}$.

If $g\colon Z\to \widetilde Z$ is a holomorphic mapping, 
then  the pullback of smooth forms induces a mapping $g^*\colon H^*(\widetilde Z)\to H^*(Z)$. 
If $h\colon W\to Z$ is a proper holomorphic mapping, then the pushforward of currents 
induces a mapping $h_*\colon H^*(W)\to H^{*+2d}(Z)$,
where $d=\text{dim}_\C\,  Z- \text{dim}_\C\, W$. The wedge product on smooth forms induces the cup product in $H^*(Z)$;
we will write $\wedge$ for the cup product of cohomology classes. Notice that if $\xi$ is a smooth 
closed form and $\mu$ is a closed
current, then
\begin{equation*}
[\xi\wedge\mu]_{dR} = [\xi]_{dR} \wedge [\mu]_{dR}.
\end{equation*}
We will also use that the projection formula \eqref{projformel}, with $\varphi$ and $\mu$ denoting cohomology classes, holds.

\begin{lemma}\label{gibbon}
Let $Z$ be a complex manifold and let $W\subset Z$ be an analytic subset.
If $\mu\in\mathcal{PS}(Z)$, then 
\begin{equation}\label{gibbons}
d\mathbf{1}_W\mu = \mathbf{1}_W d\mu, \quad d\mathbf{1}_{Z\setminus W}\mu = \mathbf{1}_{Z\setminus W} d\mu.
\end{equation}
If $\mathbf{1}_W\mu=0$ and $\mathbf{1}_{Z\setminus W} d\mu=0$, then $d\mu=0$.
\end{lemma}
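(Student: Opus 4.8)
The plan is to reduce everything to the local normal form $\mu=g_*\alpha$ and then exploit the component description of the cut-off operators recalled right after \eqref{tvattkorg}, which lets me avoid differentiating the regularizing limit $\lim_{\epsilon\to0}\chi(|h|^2/\epsilon)\mu$ directly. First I would note that $d\mu$ is again pseudosmooth: locally $\mu=g_*\alpha$ with $\alpha$ smooth and $g$ proper on $\text{supp}\,\alpha$, so $d\mu=g_*(d\alpha)$ is of the same form and $\mathbf{1}_Wd\mu$, $\mathbf{1}_{V\setminus W}d\mu$ are well defined. Next I would observe that the two identities in \eqref{gibbons} are equivalent: applying $d$ to $\mu=\mathbf{1}_W\mu+\mathbf{1}_{V\setminus W}\mu$ and comparing with $d\mu=\mathbf{1}_Wd\mu+\mathbf{1}_{V\setminus W}d\mu$ gives $(d\mathbf{1}_W\mu-\mathbf{1}_Wd\mu)+(d\mathbf{1}_{V\setminus W}\mu-\mathbf{1}_{V\setminus W}d\mu)=0$, so proving one identity yields the other. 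Hence it suffices to treat, say, the $V\setminus W$ case.

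Since the assertion is local I would fix a representation $\mu=g_*\alpha$ as above and apply the component description of $\mathbf{1}_{V\setminus W}$. Letting $V'_\ell$, $\ell\in L$, be the irreducible components of $V'$ with $g(V'_\ell)\not\subset W$ and writing $g_\ell=g|_{V'_\ell}$, $\alpha_\ell=\alpha|_{V'_\ell}$, one has $\mathbf{1}_{V\setminus W}\mu=\sum_{\ell\in L}g_{\ell*}\alpha_\ell$. The key point is that the index set $L$ depends only on $g$ and $W$, not on the form carried on $V'$; therefore the very same description applies verbatim to $d\mu=g_*(d\alpha)$, yielding $\mathbf{1}_{V\setminus W}d\mu=\sum_{\ell\in L}g_{\ell*}\big((d\alpha)|_{V'_\ell}\big)$. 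Finally I would commute $d$ through the pushforwards and restrictions: since $d$ commutes with $g_{\ell*}$ and with pullback of smooth forms under the inclusions $V'_\ell\hookrightarrow V'$, we get $(d\alpha)|_{V'_\ell}=d\alpha_\ell$ and $d\,g_{\ell*}\alpha_\ell=g_{\ell*}d\alpha_\ell$, so that $d\,\mathbf{1}_{V\setminus W}\mu=\sum_{\ell\in L}g_{\ell*}d\alpha_\ell=\mathbf{1}_{V\setminus W}d\mu$, which is the desired identity.

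The step I would be most careful about — the genuine, if modest, obstacle — is the legitimacy of applying \emph{one and the same} component decomposition to both $\mu$ and $d\mu$. This rests precisely on the fact that the partition of the components of $V'$ into those mapping into $W$ and those not is intrinsic to the pair $(g,W)$. It is this observation that replaces the naive attempt to differentiate $\lim_{\epsilon\to0}\chi(|h|^2/\epsilon)\mu$, where commuting $d$ past the limit would be the delicate analytic point; by contrast the component argument is purely formal once the local representation and the description after \eqref{tvattkorg} are in place.
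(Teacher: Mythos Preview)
Your proof is correct and follows essentially the same route as the paper: both reduce to a local representation $\mu=g_*\alpha$, invoke the component description after \eqref{tvattkorg}, and note that one identity implies the other. Your argument is in fact slightly more streamlined: the paper shows $\mathbf{1}_W\,d\mathbf{1}_{V\setminus W}\mu=0$ via \eqref{tvattkorg} and then writes $d\mathbf{1}_{V\setminus W}\mu=\mathbf{1}_{V\setminus W}\,d\mathbf{1}_{V\setminus W}\mu=\mathbf{1}_{V\setminus W}\,d\mu$, whereas you apply the component decomposition directly to $d\mu=g_*(d\alpha)$ with the same index set $L$ and compare.
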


\begin{proof}
Since $Z$ is a manifold, $\mu$ is pseudomeromorphic and so
by \cite[Proposition~3.8]{AW4} we have $\partial\mathbf{1}_W\mu = \mathbf{1}_W \partial\mu$.
Since $\mathcal{PS}(Z)$ is closed under conjugation, cf.\ \eqref{psbarps}, and the operator $\mathbf{1}_W$ is real we get
$\debar\mathbf{1}_W\mu = \mathbf{1}_W \debar\mu$. The first equality in \eqref{gibbons} thus follows.
We now get
\begin{equation*}
d\mathbf{1}_{Z\setminus W}\mu = d(\mu-\mathbf{1}_W\mu)=d\mu-\mathbf{1}_Wd\mu = \mathbf{1}_{Z\setminus W}d\mu,
\end{equation*}
which proves the second equality in \eqref{gibbons}. Moreover, if $\mathbf{1}_W\mu=0=\mathbf{1}_{Z\setminus W}d\mu$, then
\begin{equation*}
d\mu = \mathbf{1}_Wd\mu + \mathbf{1}_{Z\setminus W}d\mu = d\mathbf{1}_W\mu=0.
\end{equation*}
The last statement in the lemma is thus also proved.
\end{proof}

\begin{proposition}\label{lemuren}
Let $X$ and $Z$ be complex manifolds and $i\colon X\hookrightarrow Z$ an embedding.
Let $\pi\colon Bl_X\to Z$ be the blowup along $i(X)$, let $D\subset Bl_X$ be the exceptional divisor,
and $L\to Bl_X$ the associated line bundle.
\begin{itemize}
\item[(a)] If $\mu\in\PS(Z)$ and $\mathbf{1}_{i(X)}\mu=0$, 
then there is a unique $\nu\in\PS(Bl_X)$ such that $\pi_*\nu=\mu$ and $\mathbf{1}_D\nu=0$.
If $d\mu=0$, then $d\nu=0$.

\item[(b)] Assume that the normal bundle $N_X\to i(X)$ is equipped with a Hermitian metric
and equip $L$ with a Hermitian metric such that $L|_D\hookrightarrow \pi|_D^*N_X$ (cf.\ Lemma~\ref{sladd}) is a Hermitian embedding.
If $\nu\in\PS(Bl_X)$, $\mathbf{1}_{D}\nu=0$, and $n=\text{codim}\, i(X)$, then
\begin{equation}\label{renata}
i^!\pi_*\nu = \sum_{k=1}^n i^*\hat c_{n-k}(N_X)\wedge (\pi|_D)_* \hat s_{k-1}(L|_D,\nu).
\end{equation} 
\end{itemize}
\end{proposition}

Recall that $L|_D$ is the normal bundle of $D$ in $Bl_X$; see Proposition~\ref{ketchup} for the currents $\hat s_{k-1}(L|_D,\nu)$.

\begin{proof}
(a): The uniqueness of such a $\nu$ is clear since $\pi$ is a biholomorphism outside $D$ and $\mathbf{1}_D\nu=0$.
To show the existence it thus suffices  to see that if $U\subset Z$ is open, then there is such a $\nu$ in
$\pi^{-1}(U)$. 

In $U$ we can assume that $\mu=g_*\alpha$, where $g\colon V\to U$ 
and $\alpha$ are as in Definition~\ref{PSdef}. Since $\mathbf{1}_{i(X)}\mu=0$ we can assume that $g^{-1} i(X)\neq V$;
otherwise $\mu=0$ in view of \eqref{restr2}. 
After a modification of $V$, by Hironaka's theorem we can then assume that $g^{-1} i(X)$ is a hypersurface.
Thus, by the defining property of $Bl_X$ there is a holomorphic mapping $\tilde g\colon V\to \pi^{-1}(U)$
such that $g=\pi\circ \tilde g$. Then $\nu:=\tilde g_*\alpha\in\PS(\pi^{-1}(U))$ and 
\begin{equation*}
\mu|_U = g_*\alpha = \pi_*\tilde g_*\alpha = \pi_*\nu.
\end{equation*}
Moreover, by \eqref{restr2},
\begin{equation*}
\mathbf{1}_D\nu = \tilde g_*(\mathbf{1}_{\tilde g^{-1}(D)}\alpha) = 0
\end{equation*}
since $\tilde g^{-1}(D)=g^{-1} i(X)\neq V$.
Thus $\nu$ has the required properties in $\pi^{-1}(U)$. 

If $d\mu=0$, then clearly $\mathbf{1}_{Bl_X\setminus D}d\nu=0$, and so $d\nu=0$ by Lemma~\ref{gibbon}. Part (a) is thus
proved.

\noindent (b): We have the commutative diagram
\begin{equation}\label{eq:tomtenisse}
\xymatrix{
D  \ar[d]^{\pi|_D} \ar[r]^j & Bl_Z \ar[d]^{\pi}\\
X \ar[r]^i & Z.
}
\end{equation}

Locally in $Z$ we can assume that we have a holomorphic section
$\Psi$ of a Hermitian vector bundle $E$ such that $\Psi$ defines $i(X)$ and the embedding
$N_X\hookrightarrow E|_{i(X)}$ is Hermitian; cf.\ Example~\ref{lokal}. Then $\pi^*\Psi$ defines $D$ and since
$L|_D$ is the normal bundle of $D$ it follows by Lemma~\ref{sladd} that the metric induced on $L|_D$ by
$L|_D\hookrightarrow \pi^*E|_D$ is the same as the one induced by $L|_D\hookrightarrow \pi|_D^*N_X$.
By Proposition~\ref{ketchup}, \eqref{mos} and commutativity of \eqref{eq:tomtenisse} thus
\begin{eqnarray}\label{kropp}
i_*\hat s_{k-n}(N_X,\pi_*\nu) &=& M_k^\Psi\wedge\pi_*\nu = \pi_*(M_k^{\pi^*\Psi}\wedge\nu) =
\pi_*j_*\hat s_{k-1}(L|_D,\nu) \\
&=&
 i_*(\pi|_D)_*\hat s_{k-1}(L|_D,\nu) \nonumber
\end{eqnarray}
locally in $Z$. Since $\hat s_{k-n}(N_X,\pi_*\nu)$ and $\hat s_{k-1}(L|_D,\nu)$ are globally defined in $X$ and $D$, respectively,
\eqref{kropp} holds in $Z$.

Since $\mathbf{1}_{D}\nu=0$ it follows by \eqref{restr2} and \eqref{gurkmajo} that $M_0^\Psi\wedge\pi_*\nu=0$. 
By Proposition~\ref{ketchup} and injectivity of $i_*$ thus 
$\hat s_{-n}(N_X,\pi_*\nu)=0$. Hence, \eqref{renata} follows by \eqref{gysin}, \eqref{kropp}, and injectivity of $i_*$.
\end{proof}

In the remaining part of this section we let $f\colon X\to Y$ be a holomorphic mapping between compact complex manifolds
$X$ and $Y$ of dimensions $m$ and $n$, respectively.
We let $Z=X\times Y$ and $\pi_1\colon Z\to X$ and $\pi_2\colon Z\to Y$ be the natural projection. 
We also let $i\colon X\to Z$ be the embedding defined by $i(x)=(x,f(x))$.

\begin{lemma}\label{kohom-graflemma}
Assume that $\tau$ and $\nu$ are closed currents in $X$ and $Z$, respectively. Then
$[\tau]_{dR}=i^*[\nu]_{dR}$ if and only if $i_* [\tau]_{dR} = [i(X)]_{dR}\wedge [\nu]_{dR}$.
\end{lemma}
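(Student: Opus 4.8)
The plan is to reduce both implications to the single cohomological identity
\begin{equation*}
i_* i^*[\nu]_{dR} = [i(X)]_{dR}\wedge[\nu]_{dR}, \tag{$\star$}
\end{equation*}
and then to invoke the fact that $i$ is the graph embedding. Note first that since $H^*(Z)$ and $H^*(X)$ may be computed with either closed smooth forms or closed currents, only the classes $[\tau]_{dR}$ and $[\nu]_{dR}$ enter the statement, so the pseudosmoothness of $\tau$ and $\nu$ plays no role here. To obtain $(\star)$ I would read the current-level projection formula \eqref{projformel} (with $g=i$) through the compatibility $[\xi\wedge\mu]_{dR}=[\xi]_{dR}\wedge[\mu]_{dR}$ recalled at the start of this section; this gives the projection formula on cohomology, $i_*(i^*b\wedge a)=b\wedge i_*a$ for $a\in H^*(X)$ and $b\in H^*(Z)$. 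Specializing to $a=1$ and using $i_*1=[i(X)]_{dR}$ — the class-level form of the current identity $[i(X)]=i_*1$ noted in the introduction — yields $(\star)$.

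Granting $(\star)$, the forward implication is immediate: from $[\tau]_{dR}=i^*[\nu]_{dR}$ one gets $i_*[\tau]_{dR}=i_*i^*[\nu]_{dR}=[i(X)]_{dR}\wedge[\nu]_{dR}$. For the converse the key point is that $i_*$ is injective on cohomology, and this is exactly where the graph structure is used: the projection $\pi_1\colon Z\to X$ satisfies $\pi_1\circ i=\text{id}_X$, so by functoriality of pushforward $\pi_{1*}\circ i_*=(\pi_1\circ i)_*=\text{id}$ on $H^*(X)$. Hence $i_*$ admits the left inverse $\pi_{1*}$ and is injective. Therefore $i_*[\tau]_{dR}=[i(X)]_{dR}\wedge[\nu]_{dR}=i_*i^*[\nu]_{dR}$ forces $[\tau]_{dR}=i^*[\nu]_{dR}$, completing the equivalence.

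The argument is essentially formal, so I do not expect a genuine obstacle; the only points requiring care are bookkeeping ones. One should check that the degrees match — $i_*\colon H^*(X)\to H^{*+2n}(Z)$ and $\pi_{1*}\colon H^*(Z)\to H^{*-2n}(X)$ compose to a degree-preserving endomorphism of $H^*(X)$ — and that $i_*1$ really represents $[i(X)]_{dR}$ in the chosen (smooth or current) model of cohomology. Both are standard, and the genuine content is simply the observation that the graph projection $\pi_1$ furnishes a left inverse to the Gysin map $i_*$.
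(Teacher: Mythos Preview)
Your proposal is correct and follows essentially the same route as the paper. Both obtain $(\star)$ via the projection formula for the forward direction, and for the converse both exploit $\pi_1\circ i=\text{id}_X$: the paper phrases injectivity of $i_*$ dually by pairing $\tau-i^*\varphi$ against forms of the shape $i^*\pi_1^*\xi'$, which is exactly the same computation as your $\pi_{1*}\circ i_*=\text{id}$.
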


\begin{proof}
Let $\varphi$ be a smooth representative of $[\nu]_{dR}$. 
Then, by definition, $i^*[\nu]_{dR}=[i^*\varphi]_{dR}$. Therefore, if $[\tau]_{dR}=i^*[\nu]_{dR}$ we have
\begin{eqnarray*}
i_*[\tau]_{dR} &=& i_* [i^*\varphi]_{dR} = [i_*i^*\varphi]_{dR} = [[i(X)]\wedge \varphi]_{dR} = [i(X)]_{dR}\wedge [\varphi]_{dR} \\
&=&
[i(X)]_{dR}\wedge [\nu]_{dR}.
\end{eqnarray*}

Conversely, assume that $i_* [\tau]_{dR} = [i(X)]_{dR}\wedge [\nu]_{dR}$. Since, by the preceding calculation, 
$[i(X)]_{dR}\wedge [\nu]_{dR} = i_*[i^*\varphi]_{dR}$ it follows that
\begin{equation*}
i_*[\tau-i^*\varphi]_{dR}=0.
\end{equation*} 
This precisely means that for any smooth closed form $\xi$ in $Z$ we have
$(\tau-i^*\varphi).i^*\xi=0$. Let $\xi'$ be a smooth closed form in $X$. 
Since $\pi_1\circ i=\text{id}_X$ and $\pi_1^*\xi'$ is closed in $Z$ we get
\begin{equation*}
(\tau-i^*\varphi).\xi'=(\tau-i^*\varphi).i^*\pi_1^*\xi'=0.
\end{equation*}
Hence, $[\tau-i^*\varphi]_{dR}=0$, and thus $[\tau]_{dR}=i^*[\varphi]_{dR}=i^*[\nu]_{dR}$.
\end{proof}

\begin{lemma}\label{orangutang}
Suppose that $Y$ is good and Hermitian. Equip the normal bundle $N_X$ of $i(X)$  with the Hermitian metric
induced by \eqref{roffe}.
Let $\pi\colon Bl_X\to Z$ be the blowup along $i(X)$ and let $D\subset Bl_X$ be the exceptional divisor. 
If $\nu,\nu'\in\mathcal{PS}(Bl_X)$ are closed and such that 
$\mathbf{1}_D\nu=\mathbf{1}_D\nu'=0$ and $[\nu]_{dR}=[\nu']_{dR}$, then
\begin{equation*}
i_*[i^!\pi_*\nu]_{dR} = i_*[i^!\pi_*\nu']_{dR}.
\end{equation*}
\end{lemma}

\begin{proof}
Let $L$ be the line bundle associated with $D$ and equip $L|_D$ with the Hermitian metric induced by the embedding
$L|_D\hookrightarrow \pi|_D^*N_X$; cf.\ Lemma~\ref{sladd}. We claim that 
\begin{equation}\label{vecka}
i_*(\pi|_D)_*[\hat s_{k-1}(L|_D,\nu)]_{dR} = i_*(\pi|_D)_*[\hat s_{k-1}(L|_D,\nu')]_{dR}, \quad k\geq 1.
\end{equation}
Taking the claim for granted the lemma follows by applying $i_*$ to \eqref{renata}, use \eqref{projformel}, 
and take the cohomology class.

It remains to show the claim. Since $Y$ is good there is a holomorphic section $\Phi$ of a holomorphic vector bundle
$F$ defining the diagonal in $Y\times Y$. Then $\Psi:=(f\times \text{id}_Y)^*\Phi$
is a holomorphic section of $E:=(f\times \text{id}_Y)^*F$ defining $i(X)$.
We equip $E$ with a Hermitian metric so that the embedding $N_X\hookrightarrow E|_{i(X)}$ is Hermitian.

Now, $\pi^*\Psi$ defines $D$ and by \eqref{rov} there is an induced Hermitian metric on $L$. 
By Lemma~\ref{sladd},
on $L|_D$ this metric is the same as the one induced by
the embedding $L|_D\hookrightarrow \pi|_D^*N_X$. Let $j\colon D\to Bl_X$ be the inclusion.
In view of Propositions~\ref{ketchup} and \ref{besikta}, since $\mathbf{1}_D\nu=\mathbf{1}_D\nu'=0$,
for $k\geq 1$ we have
$$
j_*\hat s_{k-1}(L|_D,\nu)=M_k^{\pi^*\Psi}\wedge\nu = -\hat c(L^*)^k\wedge\mathbf{1}_{Bl_X\setminus D}\nu + 
d\nu_k = -\hat c(L^*)^k\wedge\nu + 
d\nu_k
$$
for some currents $\nu_k$ in $Bl_X$. For some currents $\nu_k'$ the same formula with $\nu$ and $\nu_k$ replaced by 
$\nu'$ and $\nu'_k$, respectively, holds.  Hence, since $[\nu]_{dR} = [\nu']_{dR}$,
\begin{equation}\label{veck}
j_*[\hat s_{k-1}(L|_D,\nu)]_{dR} = -c(L^*)^k\wedge [\nu]_{dR} = j_*[\hat s_{k-1}(L|_D,\nu')]_{dR}, \quad k\geq 1.
\end{equation}
By applying $\pi_*$ to \eqref{veck} and using that $\pi_*j_*= i_*(\pi|_D)_*$, cf.\ \eqref{eq:tomtenisse}, then
\eqref{vecka} follows. The claim is thus proved and the lemma follows. 
\end{proof}

We will use the following well-known facts about the blowup $\pi\colon Bl_X\to Z$
along $i(X)$, see, e.g., \cite[Chapter~4.6]{GrHa}. See \eqref{eq:tomtenisse} for the notation we will use. 
There is an exact sequence of de~Rham cohomology groups
\begin{equation}\label{exakt}
0\to H^k(Z) \longrightarrow H^k(Bl_X)\oplus H^k(X) \longrightarrow H^k(D)\to 0;
\end{equation}
the mapping $H^k(Z)\to H^k(Bl_X)\oplus H^k(X)$ is given by $a\mapsto (\pi^*a,i^*a)$ and the mapping 
$H^k(Bl_X)\oplus H^k(X)\to H^k(D)$ is given by $(a,b)\mapsto j^* a - (\pi|_D)^*b$. 
Moreover, $H^*(D)$ is an $H^*(X)$-algebra via $(\pi|_D)^*$ and as such 
it is generated by $j^* c_1(L^*)$, where $L$ is the line bundle corresponding to $D$. 
This means that if 
$\beta$ is a smooth closed $r$-form in $D$, then there are 
smooth closed $\ell$-forms $\beta_\ell$ on $X$ such that
\begin{equation}\label{trumpet}
[\beta]_{dR} = \sum_{\ell=0}^r [(\pi|_D)^*\beta_\ell]_{dR}\wedge  j^{*} c_1(L^*)^{r-\ell}.
\end{equation}
In the $H^*(X)$-algebra $H^*(D)$ there is the relation
\begin{equation}\label{rel}
\sum_{k=0}^n(\pi|_D)^*i^*c_{n-k}(N_X)\wedge j^*c_1(L^*)^k=0.
\end{equation}

\begin{theorem}\label{skon}
Let $X$ be a compact complex manifold, $Y$ a good compact complex Hermitian manifold, and $f\colon X\to Y$ a holomorphic
mapping. Let $Z=X\times Y$ and $i\colon X\to Z$ the embedding $i(x)=(x,f(x))$.
Equip the normal bundle $N_X$ of $i(X)$ with the Hermitian metric induced by \eqref{roffe}.
If $\mu\in\mathcal{PS}(Z)$ is closed, then
\begin{equation}\label{chimp}
[i^{!}\mu]_{dR} = i^*[\mu]_{dR}, \quad \quad i_*[i^{!}\mu]_{dR}=[i(X)]_{dR}\wedge [\mu]_{dR}.
\end{equation}
\end{theorem}

\begin{proof}
To begin with, notice that the two identities in \eqref{chimp} are equivalent by Lemma~\ref{kohom-graflemma}.
We have $\mu=\mathbf{1}_{i(X)}\mu + \mathbf{1}_{Z\setminus i(X)}\mu$ and by Lemma~\ref{gibbon} both 
$\mathbf{1}_{i(X)}\mu$ and 
$\mathbf{1}_{Z\setminus i(X)}\mu$ are closed. Thus, since $\text{supp}(\mathbf{1}_{i(X)}\mu)\subset i(X)$
and $\mathbf{1}_{i(X)}\mathbf{1}_{Z\setminus i(X)}\mu=0$, cf.\ \eqref{restr} and \eqref{restrV}, 
it suffices to show the theorem for a $\mu$ such that either
$\text{supp}\,\mu\subset i(X)$ or $\mathbf{1}_{i(X)}\mu=0$.

We first assume that $\text{supp}\,\mu\subset i(X)$. Then,
by Example~\ref{kaffe}, $i^!\mu=i^*\hat c_n(N_X)\wedge\hat s_{-n}(N_X,\mu_1)$ and $i_*\hat s_{-n}(N_X,\mu)=\mu$.
It is well-known that $i^*c_n(N_{X})=i^*[i(X)]_{dR}$ 
and so, in view of \eqref{projformel}, 
\begin{eqnarray*}
i_*[i^!\mu]_{dR} &=& i_*\big(i^*c_n(N_X)\wedge[\hat s_{-n}(N_X,\mu)]_{dR}\big)
=i_*\big(i^*[i(X)]_{dR}\wedge[\hat s_{-n}(N_X,\mu)]_{dR}\big) \\
&=&
[i(X)]_{dR}\wedge i_*[\hat s_{-n}(N_X,\mu)]_{dR}
=[i(X)]_{dR}\wedge [\mu]_{dR}.
\end{eqnarray*}
The theorem thus follows in case $\text{supp}\,\mu\subset i(X)$.

It remains to prove the theorem in case $\mathbf{1}_{i(X)}\mu=0$.
Let $\pi\colon Bl_X\to Z$ be the blowup along $i(X)$, $D$ the exceptional divisor, and 
$L$ the associated line bundle. Equip $L$ with a Hermitian metric such that $L|_D\hookrightarrow \pi|_D^*N_X$ is Hermitian,
cf.\ Lemma~\ref{sladd}.

Assume first that $\mu=\pi_*\hat c_1(L^*)^r$.
It is clear that the first identity in \eqref{chimp} holds if $r=0$ since in that case $\mu=1$.
For $r\geq 1$ and $k\geq 1$, in view of Example~\ref{unna},
\begin{equation*}
\hat s_{k-1}\big(L|_D,\hat c_1(L^*)^r\big) =  \hat c_1(L|_D^*)^{k-1}\wedge j^*\hat c_1(L^*)^r = j^*\hat c_1(L^*)^{r+k-1},
\end{equation*}
where $j\colon D\to Bl_X$ is the inclusion. Hence, if $\mu=\pi_*\hat c_1(L^*)^r$, in view of Proposition~\ref{lemuren} (b), \eqref{projformel}, 
and \eqref{rel} we get
\begin{eqnarray*}
[i^!\mu]_{dR} &=& \big[ \sum_{k=1}^n i^*\hat c_{n-k}(N_X)\wedge (\pi|_D)_*j^*\hat c_1(L^*)^{r+k-1}\big]_{dR} \\
&=& 
(\pi|_D)_*\big( \sum_{k=1}^n (\pi|_D)^*i^* c_{n-k}(N_X)\wedge  j^*c_1(L^*)^k\wedge j^*c_1(L^*)^{r-1}\big) \\
&=&
-(\pi|_D)_*\big( (\pi|_D)^*i^* c_{n}(N_X)\wedge j^*c_1(L^*)^{r-1}\big).
\end{eqnarray*}
Since $i^*c_n(N_X) = i^*[i(X)]_{dR}$, by \eqref{projformel}, and commutativity of \eqref{eq:tomtenisse} thus
\begin{eqnarray}\label{pannkaka}
i_*[i^!\mu]_{dR} &=& -i_*\big(i^* c_{n}(N_X)\wedge (\pi|_D)_*j^*c_1(L^*)^{r-1}\big) \nonumber \\
&=&
-[i(X)]_{dR}\wedge i_*(\pi|_D)_*j^*c_1(L^*)^{r-1}
=-[i(X)]_{dR}\wedge \pi_*j_*j^*c_1(L^*)^{r-1}.
\end{eqnarray}
However, by \eqref{pl},
\begin{equation}\label{pinnkaka}
\pi_*j_*j^*c_1(L^*)^{r-1} = \pi_*(c_1(L^*)^{r-1}\wedge [D]_{dR}) = -\pi_*c_1(L^*)^r = -[\mu]_{dR}.
\end{equation}
By \eqref{pannkaka} and \eqref{pinnkaka} thus $\mu$ satisfies the second identity in \eqref{chimp}
in case $\mu=\pi_*\hat c_1(L^*)^r$. 

Next, in view of Proposition~\ref{gyProp1} we notice that the theorem now follows  if
$\mu=\alpha_\ell\wedge \pi_*\hat c_1(L^*)^{r-\ell}$, where $\alpha_\ell$ is a smooth closed form in $Z$
and $\ell$ is an integer, $0\leq \ell\leq r$. 

Let us now show the theorem for a general $\mu$ such that $\mathbf{1}_{i(X)}\mu=0$.
In view of Proposition~\ref{lemuren} (a) there is a closed $\nu\in\mathcal{PS}(Bl_X)$ 
such that $\mathbf{1}_D\nu=0$ and $\pi_*\nu=\mu$.
Let $r$ be the degree of $\nu$.
We claim that there are smooth closed $\ell$-forms 
$\alpha_\ell$, $\ell=0,\ldots,r$, in $Z$ such that 
\begin{equation}\label{sent}
[\nu]_{dR}=[\sum_{\ell=0}^r\pi^*\alpha_\ell\wedge\hat c_1(L^*)^{r-\ell}]_{dR}.
\end{equation}
Taking the claim for granted for the moment we can finish the proof:
By Lemma~\ref{orangutang} and \eqref{projformel} we then have
\begin{equation*}
i_*[i^!\pi_*\nu]_{dR} = i_*[i^!\pi_* \sum_{\ell=0}^r\pi^*\alpha_\ell\wedge\hat c_1(L^*)^{r-\ell}]_{dR}
= i_*[i^!\sum_{\ell=0}^r (\alpha_\ell\wedge\pi_*\hat c_1(L^*)^{r-\ell})]_{dR}.
\end{equation*}
Hence, since \eqref{chimp} holds for each $\alpha_\ell\wedge\pi_*\hat c_1(L^*)^{r-\ell}$, in view of \eqref{projformel} and \eqref{sent}, 
\begin{eqnarray*}
i_*[i^!\mu]_{dR} &=& i_*[i^!\pi_*\nu]_{dR} 
=[i(X)]_{dR}\wedge \big[\sum_{\ell=0}^r \alpha_\ell\wedge\pi_*\hat c_1(L^*)^{r-\ell}\big]_{dR} \\
&=& [i(X)]_{dR}\wedge \pi_*\big[\sum_{\ell=0}^r \pi^*\alpha_\ell\wedge\hat c_1(L^*)^{r-\ell}\big]_{dR} \\
&=&
[i(X)]_{dR}\wedge \pi_* [\nu]_{dR} = [i(X)]_{dR}\wedge [\mu]_{dR},
\end{eqnarray*}
which shows \eqref{chimp}.

It remains to prove the claim.
Let $\alpha$ be a smooth closed $r$-form on $Bl_X$ such that $[\alpha]_{dR}=[\nu]_{dR}$.
Let $\beta=j^*\alpha$, and take smooth closed $\ell$-forms $\beta_\ell$ on 
$X$ such that \eqref{trumpet} holds. 
Letting $\alpha_\ell=\pi_1^*\beta_\ell$, where $\pi_1\colon Z\to X$ is the projection on the first factor, 
we get
$\beta_\ell=i^*\alpha_\ell$ since $\pi_1\circ i=\text{id}_X$. Since $i\circ \pi|_D=\pi\circ j$ thus 
$(\pi|_D)^*\beta_\ell=j^*\pi^*\alpha_\ell$; cf.\ \eqref{eq:tomtenisse}.
It thus follows by \eqref{trumpet} that
\begin{equation*}
j^*\big[\alpha - \sum_{\ell=0}^r\pi^*\alpha_\ell\wedge \hat c_1(L^*)^{r-\ell}\big]_{dR}=
\big[\beta - \sum_{\ell=0}^r (\pi|_D)^*\beta_\ell\wedge j^* \hat c_1(L^*)^{r-\ell}\big]_{dR}=0.
\end{equation*} 
Hence, since \eqref{exakt} is exact we can add a smooth closed $r$-form in $Z$ to $\alpha_r$ and get
\begin{equation*}
\big[\alpha -
\sum_{\ell=0}^r\pi^*\alpha_\ell\wedge \hat c_1(L^*)^{r-\ell}\big]_{dR}=0.
\end{equation*}
Since $[\alpha]_{dR} = [\nu]_{dR}$ this proves the claim and concludes the proof.
\end{proof}

\begin{proof}[Proof of Theorem~\ref{thm2}]
If $\mu\in\mathcal{PS}(Y)$ is closed, then $\pi_2^*\mu=1\otimes \mu\in\mathcal{PS}(Z)$ is closed;
recall that $\pi_2\colon Z\to Y$ is the projection on the second factor.
Using $\pi_2^*\mu$ 
as $\mu$ in Theorem~\ref{skon} we get
\begin{equation*}
[i^{!}\pi_2^*\mu]_{dR} = i^*[\pi_2^*\mu]_{dR}.
\end{equation*}
The left-hand side is $[f^*\mu]_{dR}$ by Definition~\ref{ister}. 
Since $\pi_2$ is a simple projection and since $f=\pi_2\circ i$, the right-hand side is 
$i^*\pi_2^*[\mu]_{dR}=f^*[\mu]_{dR}$.
\end{proof}

\begin{remark}
If there where a Poincar\'e lemma for $\PS$-currents on a complex manifold $Z$, then $H^*(Z)$ could be defined using closed  $\PS$-currents.
Theorem~\ref{thm2} would then follow by \eqref{sov2}. 
\end{remark}

\section{The obstruction to functoriality}\label{obstruktion}
We begin with an example showing that our pullback is not functorial in general.

\begin{example}\label{burger}
Let $g\colon \C^2\to \C^2$, $g(v_1,v_2)=(v_1v_2,v_2^2)$, and let $\alpha$ be a smooth function with compact support in $\C^2$. Then $\mu:=g_*\alpha\in\PS(\mathbb{C}^2)$ and has bidegree $(0,0)$.
Let 
$$
f\colon \{\text{pt}\}\to\C^2, \quad f(\text{pt})=0,
$$ 
be inclusion of a point. 
The mapping $f$ can be factorized
as $f=f_2\circ f_1$, where 
$$
f_1\colon \{\text{pt}\}\to\C, \,\, f_1(\text{pt})=0, \quad \text{and} \quad
f_2\colon \C\to\C^2, \,\, f_2(w)=(w,w^2).
$$
We claim that if $\mathbb{C}^2$ and $\mathbb{C}$ are equipped with their standard Hermitian structures, then
\begin{equation}\label{bombay}
f^*\mu=f^\diamond \mu = 2\alpha(0,0) \quad \text{and}\quad 
f_1^*f_2^*\mu = f_1^{\diamond}f_2^{\diamond}\mu = \alpha(1,0)+\alpha(-1,0),
\end{equation}
which shows that 
$f^*\mu\neq f_1^*f_2^*\mu$ and $f^{\diamond}\mu\neq f_1^{\diamond}f_2^{\diamond}\mu$
if $2\alpha(0,0)\neq\alpha(1,0)+\alpha(-1,0)$.

To begin with, notice that $f^*\mu=f^\diamond\mu$ for degree reasons. Let us calculate $f^\diamond\mu$.
We consider $z=(z_1,z_2)$ as a section of the trivial Hermitian rank-$2$ bundle 
over $\{\text{pt}\}\times \C^2_z$. This section 
defines the graph of $f$ in $\{\text{pt}\}\times\mathbb{C}^2$.
Let $\pi_1\colon \{\text{pt}\}\times\C^2\to \{\text{pt}\}$ and $\pi_2\colon \{\text{pt}\}\times\C^2\to \C^2$
be the standard projections.
We will make the identification $\{\text{pt}\}\times\C^2\simeq \C^2$ and then
these projections are identified with $\C^2\to\{\text{pt}\}$ and 
$\text{id}_{\C^2}$, respectively. In particular, $\pi_2^*\mu=\mu$.
Moreover, in view of \eqref{gurkmajo} and the Dimension principle, $M_0^z\wedge\mu=0=M_1^z\wedge\mu$.
Since $\hat c(T\C^2)=1$, it now follows by \eqref{alt11} that
\begin{equation}\label{vem}
f^{\diamond}\mu = f^*\hat c(T\C^2)\wedge (\pi_{1})_*(M^{z}\wedge \pi_2^*\mu)
=(\pi_{1})_*(M_2^{z}\wedge \mu).
\end{equation}
To compute the right-hand side, notice that $\mu=g_*\alpha = g_*p_*p^*\alpha$,
where $p\colon Bl_0\mathbb{C}^2\to\mathbb{C}^2$ is the blowup of $0$.
The section $p^*g^*z$ defines the divisor $2D+H$, where $D\subset Bl_0\C^2$
is the exceptional divisor and $H$ is the strict transform of $\{v_2=0\}$. 
If $L$ is the line bundle associated with $2D+H$, then $L\hookrightarrow Bl_0\C^2\times \C^2$, cf.\ \eqref{rov},
and we equip $L$ with the induced Hermitian metric.
In view of \eqref{mos} and Example~\ref{grillad} (ii), since $p^*\alpha$ is smooth, it follows that
\begin{equation}\label{dalen}
M_2^{z}\wedge \mu = g_*p_*(M_2^{p^*g^*z}\wedge p^*\alpha)
=g_*p_*\big(\hat c_1(L^*)\wedge(2[D]+[H])\wedge p^*\alpha\big).
\end{equation}
A direct calculation in the standard local coordinates on $Bl_0\C^2$
shows that $\hat c_1(L^*)\wedge [H]=0$ and $\hat c_1(L^*)\wedge [D]=\omega\wedge [D]$, where $\omega$ is the
Fubini--Study form on $D\simeq\mathbb{P}^1$. By \eqref{vem} and \eqref{dalen} thus
$$
f^{\diamond}\mu = (\pi_{1})_*g_*p_*\big(2\omega\wedge [D]\wedge p^*\alpha\big)
=(\pi_{1})_*g_*(2[0]\cdot\alpha(0)) = 2\alpha(0)(\pi_{1})_*[0]=2\alpha(0).
$$
The first two equalities in \eqref{bombay} now follow.

Let us now calculate $f_2^{\diamond}\mu$.
Let $p_1$ and  $p_2$ be the standard projections from $\C_w\times\C^2_z$ to the first and second 
factor, respectively. We consider $f_2(w)-z$ as a section of the trivial Hermitian rank-2 bundle over $\C_w\times\C^{2}_z$.
Then $f_2(w)-z$ defines the graph of $f_2$ in $\C_w\times\C_z^2$.
In a similar way as above, since $p_2^*\mu=(\text{id}_\C \times g)_*(1\otimes\alpha)$,
\begin{equation}\label{piazza}
f_2^{\diamond}\mu = (p_{1})_*(M_2^{f_2(w)-z}\wedge p_2^*\mu)
=(p_{1})_* (\text{id}_\C \times g)_* (M_2^{f_2(w)-g(v)}\wedge (1\otimes \alpha)).
\end{equation}
We have that $f_2(w)-g(v)=(w-v_1v_2, w^2-v_2^2)$, which defines a locally complete intersection 
ideal $\J$
in $\C_w\times \C_v^2$. The fundamental cycle $\mathcal{Z}_\J$ of $\J$ is the proper intersection of the divisors
$\text{div}(w-v_1v_2)$ and $\text{div}(w^2-v_2^2)$. A standard calculation gives
$$
\text{div}(w^2-v_2^2)\cdot \text{div}(w-v_1v_2) = 2[w=v_2=0] + [w=v_2, v_1=1] + [w=-v_2, v_1=-1].
$$
Thus, in view of \eqref{piazza}, \eqref{korv}, and Example~\ref{grillad}, 
$$
f_2^{\diamond}\mu =p_{1*} (\text{id}_\C \times g)_*
\big((1\otimes\alpha)\wedge(2[w=v_2=0] + [w=v_2, v_1=1] + [w=-v_2, v_1=-1])\big).
$$
Since $p_{1} \circ (\text{id}_\C \times g)$ is the natural projection $\C_w\times\C^2_v\to\C_w$
it follows that
\begin{equation*}
f_2^{\diamond}\mu 
=\alpha(1,w)+\alpha(-1,-w).
\end{equation*}
For degree reasons we have $f_2^{\diamond}\mu=f_2^*\mu$. 

Since $\alpha(1,w)+\alpha(-1,-w)$ is smooth
we have $f_1^{\diamond}(\alpha(1,w)+\alpha(-1,-w))=f_1^*(\alpha(1,w)+\alpha(-1,-w))=\alpha(1,0)+\alpha(-1,0)$. 
Hence, the last two equalities in \eqref{bombay} follow. \qed
\end{example}

Next we give a sufficient condition in terms of $M$-operators for functoriality of the $\diamond$-pullback.
On can formulate a similar condition for our $*$-pullback but it becomes a bit more technical so we omit giving the details.

Recall that we say that a complex manifold $X$ is good if there is a holomorphic section $\Phi$ of a vector bundle $F\to X\times X$
defining the diagonal $\Delta$ in $X\times X$. If in addition $X$ is Hermitian we say that such a $\Phi$ is a 
\emph{Hermitian defining section of the diagonal} if $F$ is equipped with a Hermitian metric such that the induced embedding $TX=N_\Delta\hookrightarrow F|_\Delta$,
where $N_\Delta$ is the normal bundle of $\Delta$, is Hermitian.

\begin{proposition}\label{korken}
Let $f_1\colon X_1\to X_2$ and $f_2\colon X_2\to Y$ be holomorphic mappings between complex manifolds.
Assume that $X_2$ and $Y$ are good and Hermitian. Let $\Phi$ and $\Psi$ be Hermitian defining sections
of the diagonals in $X_2\times X_2$ and $Y\times Y$, respectively. 
If $\mu\in\PS(Y)$ and if
$$
M^{\Phi(f_1(x_1),x_2)}\wedge M^{\Psi(f_2(x_2),y)}\wedge (1\otimes 1\otimes\mu)
=M^{\Phi(f_1(x_1),x_2)+\Psi(f_2\circ f_1(x_1),y)}\wedge(1\otimes 1\otimes\mu)
$$
in $X_1\times X_2\times Y$,
then $f_1^\diamond f_2^\diamond\mu=(f_2\circ f_1)^\diamond\mu$.
\end{proposition}

\begin{proof}
Let $\beta = f_1^*\hat c(TX_2)\otimes f_2^*\hat c(TY)\otimes 1$, which is a  smooth form in $X_1\times X_2\times Y$.
Let $\Pi\colon X_1\times X_2\times Y\to X_1$ be the natural projection.
We will show
\begin{equation}\label{3star}
\Pi_*\big(\beta\wedge M^{\Phi(f_1(x_1),x_2)}\wedge M^{\Psi(f_2(x_2),y)}\wedge (1\otimes 1\otimes\mu)\big) 
= f_1^\diamond f_2^\diamond\mu,
\end{equation}
\begin{equation}\label{2star}
\Pi_*\big(\beta\wedge M^{\Phi(f_1(x_1),x_2)+\Psi(f_2\circ f_1(x_1),y)}\wedge(1\otimes 1\otimes\mu)\big) = (f_2\circ f_1)^\diamond\mu.
\end{equation}
From \eqref{2star} and \eqref{3star} the proposition immediatly follows.

To prove \eqref{3star} we factorize $\Pi$ as
$$
X_1\times X_2\times Y \stackrel{\Pi_2}{\longrightarrow}
X_1\times X_2 \stackrel{\Pi_1}{\longrightarrow} X_1.
$$
Notice that $\Pi_2=\text{id}_{X_1}\times\pi_2$, where $\pi_2\colon X_2\times Y\to X_2$ is the natural projection.
For notational convenience, let
$$
T=M^{\Phi(f_1(x_1),x_2)}\wedge M^{\Psi(f_2(x_2),y)}\wedge (1\otimes 1\otimes\mu).
$$
By a small abuse of notation we have $\Phi(f_1(x_1),x_2)=\Pi_2^*\Phi(f_1(x_1),x_2)$ and so
$$
\beta\wedge T = 
\Pi_2^*\Pi_1^*f_1^*\hat c(TX_2)\wedge M^{\Pi_2^*\Phi(f_1(x_1),x_2)}\wedge
\big(1\otimes (\pi_2^*f_2^*\hat c(TY)\wedge M^{\Psi(f_2(x_2),y)}\wedge(1\otimes\mu))\big).
$$
By applying $(\Pi_2)_*$ and using \eqref{projformel}, \eqref{mos}, and that $(\Pi_2)_*=(\text{id}_{X_1}\times\pi_2)_*$, in view of  
\eqref{alt11} we get
$$
(\Pi_2)_*(\beta\wedge T) =
\Pi_1^*f_1^*\hat c(TX_2)\wedge M^{\Phi(f_1(x_1),x_2)} \wedge
\big(1\otimes f_2^\diamond\mu\big).
$$
Since $\Pi_*=(\Pi_1)_*(\Pi_2)_*$, by applying $(\Pi_1)_*$ now \eqref{3star} follows in view of \eqref{alt11} and 
\eqref{projformel}.

Let us now show \eqref{2star}. Let $f=f_2\circ f_1$. We first check that 
\begin{equation}\label{lammlar}
j^*\beta = j^*\big((f^*\hat c(TY)\otimes 1\otimes 1)\wedge\hat c(N)\big),
\end{equation}
where 
$$
j\colon X_1\times Y \to X_1\times X_2\times Y, \quad j(x_1,y) = (x_1,f_1(x_1),y)
$$
and $N\to j(X_1\times Y)$ is the normal bundle of $j(X_1\times Y)$ in $X_1\times X_2\times Y$ 
equipped with the metric induced by the natural isomorphism $N\simeq TX_2$.
Then $j^*\hat c(N) = j^*(1\otimes\hat c(TX_2)\otimes 1)$.
From the special form of $j$ we have 
$j^*(a\otimes b\otimes c) = (a\wedge f_1^*b)\otimes c$ if $a$, $b$, and $c$
are smooth forms on $X_1$, $X_2$, and $Y$, respectively. Thus,
\begin{equation}\label{grizzly}
j^*(f_1^*\hat c(TX_2)\otimes 1\otimes 1)=
f_1^*\hat c(TX_2)\otimes 1=
j^*(1\otimes\hat c(TX_2)\otimes 1) = j^*\hat c(N).
\end{equation}
Moreover,
\begin{equation}\label{och}
j^*(1\otimes f_2^*\hat c(TY)\otimes 1)=
f_1^*f_2^*\hat c(TY)\otimes 1=
f^*\hat c(TY)\otimes 1=
j^*(f^*\hat c(TY)\otimes 1\otimes 1).
\end{equation}
Now \eqref{lammlar} follows from \eqref{grizzly} and \eqref{och}.

Let 
$$
\widetilde T= M^{\Phi(f_1(x_1),x_2)+\Psi(f(x_1),y)}\wedge(1\otimes 1\otimes\mu).
$$
By our assumption in the proposition of course $\widetilde T=T$, but \eqref{2star} holds without this assumption
so we here distinguish between $\widetilde T$ and $T$.
In view of Lemma~\ref{senf}, $\widetilde T=j_*\nu$ for some ($\PS$-)current $\nu$. By \eqref{lammlar} and 
\eqref{projformel} thus
$$
\beta\wedge\widetilde T = (f^*\hat c(TY)\otimes 1\otimes 1)\wedge\hat c(N)\wedge\widetilde T.
$$
Hence, since $\Phi(f_1(x_1),x_2)$ defines $j(X_1\times Y)$ in $X_1\times X_2\times Y$ it follows by
Lemma~\ref{lma:faktor} and \eqref{projformel}  that
\begin{equation}\label{luncha}
\beta\wedge\widetilde T = 
j_*\big(j^*(f^*\hat c(TY)\otimes 1\otimes 1)\wedge M^{\Psi(f(x_1),y)}\wedge(1\otimes\mu)\big);
\end{equation}
Lemma~\ref{lma:faktor} is indeed applicable since, in view of Lemma~\ref{sladd}, the metric we have on $N$ is the same as the 
one induced by the embedding $N\hookrightarrow h^*F|_{j(X_1\times Y)}$, where $h(x_1,x_2,y)=(f_1(x_1),x_2)$
and $F\to X_2\times X_2$ is the Hermitian vector bundle of which $\Phi$ is a section.
Let $P\colon X_1\times Y\to X_1$ be the natural projection. Then $P_*=\Pi_* j_*$ and moreover
$P^*f^*\hat c(TY)=j^*(f^*\hat c(TY)\otimes 1\otimes 1)$. Hence, by applying $\Pi_*$ to \eqref{luncha}, in view of \eqref{alt11}
we get
\begin{eqnarray*}
\Pi_*(\beta\wedge\widetilde T) &=& 
\Pi_*j_*\big(P^*f^*\hat c(TY)\wedge M^{\Psi(f(x_1),y)}\wedge(1\otimes\mu)\big) \\
&=&
P_*\big(P^*f^*\hat c(TY)\wedge M^{\Psi(f(x_1),y)}\wedge(1\otimes\mu)\big)
=f^\diamond \mu.
\end{eqnarray*}
This proves \eqref{2star} and the proposition follows.
\end{proof}

Whether the assumed $M$-identity in Proposition~\ref{korken} holds
depends on how singular $\mu$ is compared to the composition $f_2\circ f_1$.
To illustrate this, assume that $\mu=g_*\alpha$, where $g\colon V\to Y$ and $\alpha$ are as in Definition~\ref{PSdef}.
Then one can show that if the fiber product $V':=X_2\times_Y V$ is a locally complete intersection in $X_2\times V$
and the fiber product $X_1\times_{X_2} V'$ is a locally complete intersection in $X_1\times V'$, then the assumed identity
in Proposition~\ref{korken} holds.
From this it follows in particular that if $\mu$ in Proposition~\ref{korken} is a smooth form, 
so that we can take $V=Y$, then the $M$-identity in the proposition holds.

We conclude by a comment about the assumption  in Proposition~\ref{korken}  that $X_2$ and $Y$ are good.
The conclusion $f_1^\diamond f_2^\diamond \mu = (f_2\circ f_1)^\diamond\mu$ of Proposition~\ref{korken} is a local statement in $X$ so
we can replace $X$ by a small neighborhood of a given $x\in X$.
In view of Proposition~\ref{submersionProp}(a) one can 
then replace $X_2$ and $Y$ in Proposition~\ref{korken} by small neighborhoods of $f_1(x)$ and $f_2(f_1(x))$, respectively. Locally any complex manifold is good,
cf.\ the proof of Proposition~\ref{ketchup}. The assumption that  $X_2$ and $Y$ are good thus becomes superfluous
after these localizations.

\end{document}